\newtheorem{theorem}{Theorem}[section]
\newtheorem{lemma}[theorem]{Lemma}
\newtheorem{remark}[theorem]{Remark}
\newtheorem{definition}[theorem]{Definition}
\newtheorem{corollary}[theorem]{Corollary}
\newtheorem{proposition}[theorem]{Proposition}
\newtheorem{lem-def}[theorem]{Lemma-Definition}
\def\as#1{\renewcommand\arraystretch{#1}}
\def\ax{A[x]}
\def\a1x{A_1[x]}
\def\dsc{\operatorname{disc}}
\def\F{\mathbb F}
\def\fph{\F_{\phi}}
\def\gen#1{\big\langle\, {#1} \,\big\rangle}
\def\ind{\operatorname{ind}}
\def\iso{\ \lower .6ex\hbox{$\stackrel{\lra}{\mbox{\tiny $\sim\,$}}$}\ }
\def\lra{\longrightarrow}
\def\m{\mathfrak{m}}
\def\md#1{\ \mbox{\rm(mod }{#1})}
\def\mn{\operatorname{Min}}
\def\nph{\operatorname{N}_{\phi}}
\def\npp{\operatorname{N}_{\phi}^-}
\def\op{\operatorname}
\def\ord{\operatorname{ord}}
\def\p{\mathfrak{p}}
\def\pp{\mathcal{P}}
\def\Q{\mathbb Q}
\def\qb{\overline{\Q}}
\def\R{\mathbb R}
\def\rd{\operatorname{red}}
\def\t{\theta}
\def\v2{v_2^{(2)}}
\def\Z{\mathbb Z}
\def\zp{\Z_{(p)}}
\def\zpx{\Z_p[x]}
\newtheorem{alg}[theorem]{Algorithm}
\newlength{\alginputwidth}
\newlength{\algtmp}
\newcommand{\Algo}[5]
            {
            \begin{alg}[#1] \label{#2}{$\;$}\rm
                \\
\mbox{\enspace}
                \rlap{\rm Input: }\phantom{\rm Output: }
\parbox[t]{\alginputwidth}{#3}
                \\
\mbox{\enspace}
                {\rm Output: }
\parbox[t]{\alginputwidth}{#4}
\parskip0pt
\begin{list}{}{\setlength{\leftmargin}{0pt}}
\item                #5
\end{list}
            \end{alg}
            \goodbreak}
\title[Local-to-global computation of integral bases]{Local-to-global computation of integral bases without a previous factorization of the discriminant}
\author{Jordi Gu\`ardia}
\address{Departament de Matem\`atica Aplicada IV, \ Escola Polit\`ecnica Superior d'Enginyeria de Vilanova i la Geltr\'u, Av. V\'\i ctor Balaguer s/n. E-08800 Vilanova i la Geltr\'u, Catalonia, Spain}
\email{guardia@ma4.upc.edu}
\author{Enric Nart}
\address{Departament de Matem\`{a}tiques,
         Universitat Aut\`{o}noma de Barcelona,
         Edifici C\\ E-08193 Bellaterra, Barcelona, Catalonia, Spain}
\email{nart@mat.uab.cat}
\keywords{discriminant; integral basis; Newton polygon; residual polynomial}
\thanks{Partially supported by MTM2012-34611 and MTM2013-40680 from the Spanish MEC}
\subjclass[2010]{Primary 11R04; Secondary 11Y40}
\begin{document}
\begin{abstract}
We adapt an old local-to-global technique of Ore to compute, under certain mild assumptions, an integral basis of a number field without a previous factorization of the discriminant of the defining polynomial. In a first phase, the method yields as a by-product successive splittings of the discriminant. When this phase concludes, it requires a squarefree factorization of some base factors of the discriminant to terminate. 
\end{abstract}

\maketitle


\section*{Introduction}
In his 1923 PhD thesis and a series of subsequent papers, Ore used Newton polygon techniques to solve some basic arithmetic tasks in number fields, such as prime ideal decomposition or the construction of local integral bases  \cite{ore23,ore24,ore25,ore26,ore28}. 
From a computational perspective, Ore's methods are very efficient but they work only under certain mild assumptions. 

Let $f\in\Z[x]$ be a monic irreducible polynomial of degree $n>1$. Let $K=\Q[x]/(f)$ be the corresponding number field and denote by $\Z_K$ its ring of integers. 
In order to find the prime ideal decomposition of a prime number $p$, or to construct a $p$-integral basis of $\Z_K$, the defining polynomial $f$ must be  \emph{$p$-regular} (Definition \ref{pregular}).

Ore's method performs three \emph {classical dissections}, yielding a successive factorization of $f$ over $\Z_p[x]$. The $p$-regularity condition ensures that all $p$-adic factors of $f$ that have been found after these dissections are irreducible. 

These techniques are of a local nature, but the computation of a global integral basis of $K$ (a basis of $\Z_K$ as a $\Z$-module) may be derived from the local $p$-bases for $p$ running on the prime factors of the discriminant $\dsc(f)$ of $f$.  However, this local-to-global approach requires the factorization of $\dsc(f)$, which is impossible (or an extremely heavy task) if the degree of $f$ is large and/or it has large coefficients. 
 
In this paper, we show that the three classical dissections may be applied to find an $N$-integral basis, for any given positive divisor $N$ of the discriminant, under a similar assumption of $N$-regularity. An $N$-integral basis is a family $\alpha_1,\dots,\alpha_n\in\Z_K$, which is simultaneously a $p$-integral basis for all prime divisors $p$ of $N$. 

The $N$-integral basis is derived immediately from some data collected along the execution of the three classical dissections, in a complete analogy with the \emph{method of the quotients} introduced in \cite{np4}.

We follow Lenstra's strategy as in the elliptic curve factorization algorithm. We proceed as if $N$ were a prime; if this causes no trouble we get a candidate for an $N$-integral basis, but if at some step the method crashes then it yields a proper divisor of $N$. In the latter case, we may write $N=N_1^{e_1}\cdots N_k^{e_k}$ with some coprime base factors $N_1,\dots N_k$, and we may start over to construct $N_i$-integral bases for all $i$.  Once this phase concludes, we get $m$-integral bases of all base factors $m$ of $N$ obtained so far, if these base factors $m$ are squarefree. If this is not the case, we must find their squarefree factorization to continue the procedure. 

Once we get $m$-integral bases of all coprime base factors $m$ of $N$, they may be patched together, in a well-known way, to provide an $N$-integral basis. 
When applied to $N=\dsc(f)$, this procedure computes a global integral basis of $K$.

With respect to the traditional local-to-global approach, this method has a double advantage. On one hand, it does not require a previous factorization of the discriminant; on the other hand, it requires squarefree factorization of some integers $m$, but after several splittings of the discriminant, these base factors $m$ may be much smaller than the discriminant itself.

It is quite plausible that the assumption of $N$-regularity may be dropped and the methods of this paper lead to an unconditional local-to-global computation of integral bases. In fact, inspired in some work by MacLane \cite{mcla,mclb}, Montes designed an algorithm to perform successive dissections beyond the three classical ones, to obtain an unconditional $p$-adic factorization of $f$ \cite{GMN,m}. Also, in the paper \cite{bases}, the method of the quotients was adapted to this general situation to provide a concrete procedure to construct $p$-integral bases from data collected along the execution of all these dissections. Thus, the only work to be done is the extension of the ideas of this paper to Montes' ``higher order" dissections.

\section*{Acknowledgements}
It was Claus Fieker who suggested to us that an adequate development of Ore's methods modulo $N$ could lead to these kind of results. We are indebted to him for his fine intuition.    

\section{The three classical dissections}\label{secOre}
In this section we recall some results of Ore on arithmetic applications of Newton polygons \cite{ore23,ore24,ore25,ore26,ore28}. Modern proofs of these results can be found in \cite[Sec. 1]{GMN}.

We fix from now on a monic irreducible polynomial $f\in\Z[x]$ of degree $n>1$, and a prime number $p$.

Take a root $\t\in\qb$ of $f$. Let $K=\Q(\t)$ be the number field generated by $\t$ and denote by $\Z_K$ the ring of integers of $K$. 

Let $\Z_p$ be the ring of $p$-adic integers, $\Q_p$ the fraction field of $\Z_p$ and $\qb_p$ an algebraic closure of $\Q_p$. 
We denote by $v_p\colon \qb_p^{\,*}\longrightarrow \Q$, the $p$-adic valuation normalized by $v_p(p)=1$.

Let $\pp$ be the set of prime ideals of $K$ lying above $p$.
For any $\p\in\pp$, we denote by $v_{\p}$ the discrete valuation of $K$ associated to $\p$, and by $e(\p/p)$ the ramification index of $\p$. Endow $K$ with the $\p$-adic topology and fix a topological embedding $\iota_\p\colon K\hookrightarrow \qb_p$; we then have,
$$
v_\p(\alpha)=e(\p/p)v_p(\iota_\p(\alpha)),\quad\forall \,\alpha\in K.
$$

By a celebrated theorem of Hensel, there is a canonical bijection between $\pp$ and the set of monic irreducible factors of $f$ in $\Z_p[x]$. The irreducible factor attached to a prime ideal $\p$
is the minimal polynomial $F_\p\in\Z_p[x]$ of $\iota_\p(\t)$ over $\Q_p$. 

\subsection{First dissection: Hensel's lemma}
Let us choose monic polynomials $\phi_1,\dots,\phi_t\in\Z_p[x]$ whose reduction modulo $p$ are the pairwise different irreducible factors of $f$ modulo $p$. We then have a decomposition:
$$
f\equiv \phi_1^{\ell_1}\dots\;\phi_t^{\ell_t} \md{p},
$$for certain positive exponents $\ell_1,\dots,\ell_t$. By Hensel's lemma, $f$ decomposes in $\Z_p[x]$ as:
$$
f=F_1\cdots F_t,
$$for certain monic factors $F_i\in\Z_p[x]$ such that $F_i\equiv \phi_i^{\ell_i} \md{p}$.

\begin{definition}For all $1\le i\le t$, define
$\pp_{\phi_i}:=\{\p\in\pp\mid v_p(\phi_i(\iota_\p(\t)))>0\}$.
\end{definition}

Since the polynomials $\phi_1,\dots,\phi_t$ are pairwise coprime modulo $p$, the set $\pp$ splits as the disjoint union: 
$\ \pp=\pp_{\phi_1}\coprod\cdots\coprod \pp_{\phi_t}$.

\subsection{Second dissection: Newton polygons}\label{subsec2nd}
Let us fix one of the $p$-adic factors of $f$; say, $F=F_i$ for some $1\le i \le t$. Accordingly, denote $$\phi=\phi_i, \quad\ell=\ell_i.$$ 

The aim of the second dissection is to obtain a further splitting of $F$ in $\Z_p[x]$, or equivalently, a further dissection of the set $\pp_\phi$.

Let us extend $v_p$ to a discrete valuation of $\Q_p(x)$ by letting it act in the following way on polynomials:
$$
v_p(a_0+a_1x+\cdots+a_sx^s+\cdots)=\mn_{0\le s}\{v_p(a_s)\}.
$$

Our defining polynomial $f$ admits a unique $\phi$-expansion:
$$
f=a_0+a_1\phi+\cdots+a_r\phi^r,
$$
with $a_i\in\Z_p[x]$ having $\deg a_i<\deg\phi$. For any coefficient $a_i$ we compute the $p$-adic value $u_i=v_p(a_i)\in\Z\cup\{\infty\}$.

\begin{definition}
The $\phi$-Newton polygon of $f$ is the lower convex hull of the set of all points $(i,u_i)$, $u_i<\infty$, in the Euclidian plane.
We denote this open convex polygon by $\nph(f)$.
\end{definition}

The \emph{length} of this polygon is by definition the abscissa of the last vertex. We denote it by  $\ell(\nph(f))=r=\lfloor \deg(f)/\deg\phi\rfloor$. The typical shape of this polygon is shown in Figure \ref{fig1}.

\begin{figure}
\begin{center}
\setlength{\unitlength}{5.mm}
\begin{picture}(13,7)
\put(6.85,.85){$\bullet$}\put(5.85,-.15){$\bullet$}\put(3.85,2.85){$\bullet$}
\put(2.85,1.85){$\bullet$}\put(1.85,3.85){$\bullet$}\put(2.35,4.85){$\bullet$}\put(.85,5.85){$\bullet$}
\put(0,0){\line(1,0){12}}\put(1,-1){\line(0,1){8}}\put(10.85,-.15){$\bullet$}
\put(6,0){\line(-3,2){3}}\put(3,2){\line(-1,2){2}}\put(6,.03){\line(-3,2){3}}
\put(3,2.03){\line(-1,2){2}}\put(11,0){\line(-1,0){5}}\put(11,.02){\line(-1,0){5}}
\put(10.9,-.8){\begin{footnotesize}$r$\end{footnotesize}}
\put(6,-.8){\begin{footnotesize}$\ell$\end{footnotesize}}
\put(.6,-.6){\begin{footnotesize}$0$\end{footnotesize}}
\put(6,4.6){\begin{footnotesize}$\nph(f)$\end{footnotesize}}
\end{picture}\qquad\qquad
\begin{picture}(8,8)
\put(4.85,-.15){$\bullet$}\put(2.85,2.85){$\bullet$}\put(1.35,4.85){$\bullet$}
\put(1.85,1.85){$\bullet$}\put(.85,3.85){$\bullet$}\put(-.15,5.85){$\bullet$}
\put(-1,0){\line(1,0){8}}\put(0,-1){\line(0,1){8}}
\put(5,0){\line(-3,2){3}}\put(2,2){\line(-1,2){2}}\put(5,.03){\line(-3,2){3}}
\put(2,2.03){\line(-1,2){2}}
\put(5,-.8){\begin{footnotesize}$\ell$\end{footnotesize}}
\put(-.4,-.6){\begin{footnotesize}$0$\end{footnotesize}}
\put(2.6,4.6){\begin{footnotesize}$\npp(f)$\end{footnotesize}}
\end{picture}\caption{}\label{fig1}
\end{center}
\end{figure}

The $\phi$-Newton polygon is the union of different adjacent \emph{sides}, whose endpoints are called \emph{vertices} of the polygon.

\begin{definition}
The polygon $\npp(f)$ determined by the sides of negative slope of $\nph(f)$ is called the \emph{principal $\phi$-polygon} of $f$. \end{definition}

Note that the length of $\npp(f)$ is equal to $\ell$, the order with which the reduction of $\phi$ modulo $p$ divides the reduction of $f$ modulo $p$.

\begin{theorem}\label{Diss2} 
With the above notation, suppose that the principal Newton polygon $\npp(f)$ has $k$ different sides with slopes $-\lambda_1<\cdots<-\lambda_k$. Then $F$ admits a factorization in $\zpx$ into a product of $k$ monic polynomials
$$
F=G_1\cdots G_k,
$$
such that, for all $1\le j\le k$, 
\begin{enumerate}
\item $\nph(G_j)$ is one-sided, with slope $-\lambda_j$,
\item All roots $\alpha\in\qb_p$ of $G_j$ satisfy $v_p(\phi(\alpha))=\lambda_j$. 
\end{enumerate}
\end{theorem}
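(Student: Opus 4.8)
The plan is to control the Hensel factor $F$ through the $v_p$-valuations of $\phi$ evaluated at the roots of $F$, and then to read off the factorization by grouping those roots. Since $\bar\phi$ is irreducible over $\F_p$ it is separable, $\phi$ is irreducible over $\Q_p$, and its splitting field is the unramified extension $L=\Q_p(\omega)$ for any root $\omega$; the roots of $\phi$ are the pairwise distinct Frobenius conjugates of $\omega$. The first step is a local computation: if $\alpha\in\qb_p$ is a root of $F$, then $\bar F=\bar\phi^{\,\ell}$ forces $\bar\alpha$ to be a root of $\bar\phi$, so after replacing $\omega$ by a suitable conjugate we may assume $\bar\alpha=\bar\omega$; for every other root $\omega'$ of $\phi$ we have $\overline{\alpha-\omega'}=\bar\omega-\bar\omega'\neq0$, hence $v_p(\alpha-\omega')=0$, and multiplying over the roots of $\phi$ gives the basic identity $v_p(\phi(\alpha))=v_p(\alpha-\omega)$.

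The technical heart, which I expect to be the main obstacle, is the \emph{theorem of the polygon}: the multiset $\{\,v_p(\phi(\alpha))\mid F(\alpha)=0\,\}$, roots counted with multiplicity, takes exactly the values $\lambda_1<\cdots<\lambda_k$, and the number of roots with $v_p(\phi(\alpha))=\lambda_j$ equals $(\deg\phi)\,E_j$, where $E_j$ is the horizontal length of the side of $\npp(F)$ of slope $-\lambda_j$. I would prove this by passing to $L$ through the substitution $x\mapsto x+\omega$: using that $v_p(b(\omega))=v_p(b)$ for any $b\in\Z_p[x]$ with $\deg b<\deg\phi$ (because $1,\omega,\dots,\omega^{\deg\phi-1}$ is a $\Z_p$-basis of $\mathcal O_L$) and that $\phi(x+\omega)=x\cdot c(x)$ with $c(0)=\phi'(\omega)$ a unit, one matches $\npp(F)$ with the negative-slope part of the ordinary Newton polygon of $F(x+\omega)$ over the complete field $L$; the classical theory then identifies its slopes with the $-v_p(\alpha-\omega)=-v_p(\phi(\alpha))$ over the $\ell$ roots $\alpha$ of $F$ reducing to $\bar\omega$, and letting $\omega$ run over all $\deg\phi$ roots of $\phi$ produces the claimed total multiplicities. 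Keeping this dictionary and the length bookkeeping exact is the delicate point.

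Granting this, the rest is formal. For $1\le j\le k$ set $G_j:=\prod(x-\alpha)$, the product over the roots $\alpha$ of $F$ (with multiplicity) with $v_p(\phi(\alpha))=\lambda_j$. Since $\phi\in\Z_p[x]$ and $v_p$ is $\op{Gal}(\qb_p/\Q_p)$-invariant, each $G_j$ is Galois-stable, hence lies in $\Q_p[x]$; being monic with roots integral over $\Z_p$ (every root $\alpha$ of $F$ has $v_p(\phi(\alpha))>0$, so $\bar\alpha$ is a root of $\bar\phi$ and $\alpha$ is integral), it lies in $\zpx$. By construction $F=G_1\cdots G_k$ and every root of $G_j$ has $v_p(\phi(\alpha))=\lambda_j$, which is assertion~(2). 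Applying the theorem of the polygon to $G_j$, all of whose roots share the single value $\lambda_j>0$, shows $\nph(G_j)=\npp(G_j)$ is a single side of slope $-\lambda_j$, which is assertion~(1); as a consistency check, $\npp(F)$ is then the concatenation of the $\npp(G_j)$. The same root computation also shows $\npp(f)=\npp(F)$, since the remaining Hensel factors of $f$ are prime to $\phi$ modulo $p$, so all their roots $\beta$ have $v_p(\phi(\beta))=0$.

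An alternative that avoids roots entirely is induction on $k$ via a Hensel-type factorization lemma over $\Z_p$: an interior vertex of $\npp(F)$ splits $F=F'F''$ with the two $\phi$-polygons being the corresponding pieces, and iterating over the vertices separates the sides; there the successive-approximation argument behind that Hensel step is the main obstacle in place of the theorem of the polygon.
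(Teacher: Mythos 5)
The paper does not actually prove Theorem~\ref{Diss2}: it is recalled as a classical result of Ore, with modern proofs referenced to \cite{GMN}, so there is no in-paper argument to measure yours against. Taken on its own terms, your proposal is correct, and it follows Ore's original route rather than the one in the cited reference. You reduce everything to the ``theorem of the polygon'' (the multiset of values $v_p(\phi(\alpha))$ over the roots of $F$ is read off from the sides of $\npp(f)$, the value $\lambda_j$ occurring $E_j\deg\phi$ times), prove that by the substitution $x\mapsto x+\omega$ into the unramified splitting field $L$ of $\phi$, and then assemble the $G_j$ by grouping roots. The identity $v_p(\phi(\alpha))=v_p(\alpha-\omega)$, the Galois-stability and integrality giving $G_j\in\zpx$, and the re-application of the polygon theorem to $G_j$ (legitimate because $\overline{G_j}=\overline{\phi}^{\,E_j}$, each irreducible factor of $\overline{G_j}$ sharing a root with the irreducible $\overline{\phi}$) are all sound. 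The step you rightly flag as delicate --- that the negative-slope part of the ordinary Newton polygon of $F(x+\omega)$ over $L$ coincides with $\npp(F)$ --- does go through: writing $F=\sum_i b_i\phi^i$ with $u_i=v_p(b_i)$, the coefficient of $x^j$ in $F(x+\omega)$ equals $b_j(\omega)\phi'(\omega)^j$ (of valuation exactly $u_j$) plus contributions from indices $i<j$ of valuation at least $u_i$; since the principal polygon is strictly decreasing on $[0,\ell]$, for $i<j\le\ell$ the value $u_i$ strictly exceeds the ordinate of the polygon at $j$, so these cross terms never push a point below $\npp(F)$ and do not disturb the valuation at any abscissa where $(j,u_j)$ lies on the polygon. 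You also need $\npp(f)=\npp(F)$, which you justify for the right reason (the cofactor of $F$ is coprime to $\phi$ modulo $p$, so its principal $\phi$-polygon is trivial); strictly speaking this invokes additivity of principal polygons under products, which is the same kind of coefficient computation. By contrast, the proof in \cite{GMN} goes through the theorem of the product and a direct computation of $\npp$ for an irreducible polynomial of $\zpx$, never leaving $\Q_p$; your route buys a more classical, elementary core (the one-variable Newton polygon over a complete field) at the cost of the coefficient bookkeeping just described.
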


In particular, the set $\pp_\phi$ splits as the disjoint union: 
$$
\pp_\phi=\pp_{\phi,\lambda_1}\coprod\cdots\coprod \pp_{\phi,\lambda_k},\qquad\pp_{\phi,\lambda}:=\left\{\p\in\pp_\phi\mid v_p(\phi(\iota_\p(\t)))=\lambda\right\}.
$$

\subsection{Third  dissection: residual polynomials}\label{subsec3rd}
Keeping with the above notation, we fix one of the $p$-adic factors of $F$; say, $G=G_j$ for some $1\le j \le k$. Accordingly, denote $\lambda=\lambda_j$. 

The aim of the third dissection is to obtain a further splitting of $G$ in $\Z_p[x]$, or equivalently, a further dissection of the set $\pp_{\phi,\lambda}$.

By construction, the points $(i,u_i)$ lie all on or above $\npp(f)$. The set of points $(i,u_i)$ that lie on $\npp(f)$ contain the arithmetic information we are interested in. 

Consider the maximal ideal $(p,\phi)$ of $\Z_p[x]$ and denote by 
$$ \rd_{p,\phi}\colon \Z_p[x]\lra \F_{p,\phi}:=\Z_p[x]/(p,\phi)$$
the homomorphism of reduction modulo $(p,\phi)$.
We attach to any integer abscissa $0\le i\le \ell$ the following \emph{residual coefficient} $c_i\in\F_{p,\phi}$:
$$\as{1.6}
c_i=\left\{\begin{array}{ll}
0,&\mbox{ if $(i,u_i)$ lies above $\npp(f)$, or }u_i=\infty,\\\rd_{p,\phi}\left(\dfrac{a_i}{p^{u_i}}\right),&\mbox{ if $(i,u_i)$ lies on }\npp(f).
\end{array}
\right.
$$ Note that $c_i$ is always nonzero in the latter case, because $\deg a_i<\deg\phi$. 

Let $S$ be the side of $\npp(f)$ with slope $-\lambda=-h/e$, where $h,e$ are positive coprime integers. We introduce the following notation:
\begin{enumerate}
\item $\ell(S)$ is the length of the projection of $S$ to the $x$-axis,
\item $d(S):=\ell(S)/e$.
\end{enumerate} 

Note that $S$ is divided into $d(S)$ segments by the
points of integer coordinates that lie on  $S$. 

\begin{definition}
Let $s$ be the abscissa of the left endpoint of $S$, and let $d=d(S)$. The \emph{residual polynomial} attached to $S$ (or to $\lambda$) is defined as:
$$
R_{\phi,\lambda}(f)=c_s+c_{s+e}\,y+\cdots+c_{s+(d-1)e}\,y^{d-1}+c_{s+de}\,y^d\in\F_{p,\phi}[y].
$$
\end{definition}

Note that $c_s$ and $c_{s+de}$ are always nonzero, so that the residual polynomial has degree $d$ and is never  divisible by $y$.

\begin{theorem}\label{Diss3} 
With the above notation, suppose that $R_{\phi,\lambda}(f)$ decomposes
$$R_{\phi,\lambda}(f)=c\,\psi_{1}^{n_{1}}\cdots \psi_{r}^{n_{r}},\quad c\in\F_{p,\phi}^*,$$into a product of powers of pairwise different monic irreducible polynomials in $\F_{p,\phi}[y]$. Then, the polynomial $G$ has a further factorization in $\Z_p[x]$ into a product of $r$ monic polynomials
$$
G=H_{1}\cdots H_{r},
$$   
such that  $\nph(H_i)$ is one-sided of slope $\lambda$ and $R_{\phi,\lambda}(H_i)=\psi_{i}^{n_{i}}$, for all $i$. 

Finally, if  $n_{i}=1$, the polynomial $H_{i}$ is irreducible in $\Z_p[x]$ and the ramification index and residual degree of the $p$-adic field $K_{i}=\Z_p[x]/(H_i)$ are given by
$\
e(K_{i}/\Q_p)=e$, $\, f(K_{i}/\Q_p)=\deg\phi\cdot\deg\psi_{i}$.\hfill{$\Box$}
\end{theorem}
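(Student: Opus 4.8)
The plan is to reduce the statement to a Hensel-type lifting result in a graded/residual setting, and then to read off the arithmetic of the case $n_i=1$ from the shape of the lifted factors. First I would replace $f$ by $G$. The factors $F_{i'}$ with $i'\ne i$ coming from the first dissection have trivial principal $\phi$-polygon, since $\bar\phi\nmid\bar F_{i'}$ forces the constant term of their $\phi$-expansion to be a unit; and the factors $G_{j'}$ with $j'\ne j$ of the second dissection have one-sided polygon of a slope $-\lambda_{j'}\ne-\lambda$. By the multiplicativity of Newton polygons and residual polynomials under products (the \emph{theorem of the product}: $\nph(gh)$ is the Minkowski sum of $\nph(g)$ and $\nph(h)$, and $R_{\phi,\mu}(gh)=R_{\phi,\mu}(g)R_{\phi,\mu}(h)$ for every slope $-\mu$, a factor missing that slope contributing a nonzero constant), we obtain $R_{\phi,\lambda}(f)=c'\,R_{\phi,\lambda}(G)$ with $c'\in\fph^*$. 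Hence it suffices to show that a factorization $R_{\phi,\lambda}(G)=c\,\psi_1^{n_1}\cdots\psi_r^{n_r}$ into pairwise coprime pieces lifts to a factorization $G=H_1\cdots H_r$ in $\zpx$ with each $\nph(H_i)$ one-sided of slope $-\lambda$ and $R_{\phi,\lambda}(H_i)=\psi_i^{n_i}$. Comparing lengths via the product theorem, $\ell(\npp(H_i))=e\,n_i\deg\psi_i$ and these add up to $\ell(\npp(G))$; together with $\deg\phi\mid\deg H_i$ (roots of $H_i$ reduce to roots of the irreducible $\bar\phi$) this fixes $\deg H_i=\deg\phi\cdot e\,n_i\deg\psi_i$.

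For the lifting I would work with the order-one valuation $v$ on $\Q_p[x]$ acting on $\phi$-expansions by $v(\sum_i b_i\phi^i)=\mn_i\{v_p(b_i)+i\lambda\}$. Its graded ring is, essentially, a polynomial ring in one variable over $\fph$, and under this identification $R_{\phi,\lambda}$ becomes (up to normalization) the passage to the initial term, so that the theorem of the product is nothing but the multiplicativity of initial terms. A coprime factorization of the initial term of $G$ lifts first to a congruence $G\equiv H_1^{(0)}\cdots H_r^{(0)}$ modulo terms of strictly larger $v$-value, and then a Hensel/Newton iteration, whose relevant ``invertible Jacobian'' is precisely the pairwise coprimality of the $\psi_i^{n_i}$, converges $p$-adically to the desired factorization inside $\zpx$. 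An induction on $r$ reduces this to splitting off a single factor, and keeping track of polygon lengths along the iteration shows that each $\nph(H_i)$ is one-sided of slope $-\lambda$ with $R_{\phi,\lambda}(H_i)=\psi_i^{n_i}$.

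For the last assertion, assume $n_i=1$, let $\alpha\in\qb_p$ be a root of $H_i$, and put $K_i=\Q_p(\alpha)$, so $[K_i:\Q_p]\le\deg H_i=e\deg\phi\deg\psi_i$. By Theorem \ref{Diss2}, $v_p(\phi(\alpha))=\lambda=h/e$ with $\gcd(h,e)=1$, hence $h/e$ lies in the value group of $K_i$ and $e\mid e(K_i/\Q_p)$. Since $v_p(\phi(\alpha))>0$, the residue of $\alpha$ is a root of $\bar\phi$, so the residue field $k_i$ of $K_i$ contains a copy of $\fph$ and $\deg\phi\mid f(K_i/\Q_p)$. Dividing $\phi(\alpha)$ by suitable powers of $p$ and of a uniformizer of $K_i$ produces a unit whose residue $\bar\xi\in k_i$ is a root of $\psi_i$ over $\fph$; as $\psi_i$ is irreducible, $\deg\psi_i\mid[k_i:\fph]=f(K_i/\Q_p)/\deg\phi$. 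Therefore $e\deg\phi\deg\psi_i\mid e(K_i/\Q_p)f(K_i/\Q_p)=[K_i:\Q_p]\le e\deg\phi\deg\psi_i$, so equality holds throughout: $H_i$ is irreducible, $e(K_i/\Q_p)=e$, and $f(K_i/\Q_p)=\deg\phi\cdot\deg\psi_i$.

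I expect the main obstacle to be the lifting step: setting up the graded/residual formalism cleanly, proving that initial terms are multiplicative (which already contains the theorem of the product), and showing that a coprime factorization of the initial term of $G$ genuinely lifts to $\zpx$. The non-linearity of $\phi$ makes the control of $\phi$-expansions under multiplication, and along the Newton iteration, the real work; once this is in place, the length bookkeeping and the final ramification and residue degree count are routine.
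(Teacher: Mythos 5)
The paper itself gives no proof of this theorem: it is one of Ore's classical dissection results, stated with a terminal $\Box$ and a pointer to \cite[Sec.~1]{GMN} for modern proofs. Your outline --- reducing from $f$ to $G$ via the theorem of the product (the other $p$-adic factors contributing trivially or with different slopes), lifting the coprime factorization of $R_{\phi,\lambda}(G)$ by a Hensel/Newton iteration in the graded ring of the valuation $v(\sum_i b_i\phi^i)=\min_i\{v_p(b_i)+i\lambda\}$, and then, for $n_i=1$, the divisibility chain $e\deg\phi\deg\psi_i\mid e(K_i/\Q_p)f(K_i/\Q_p)=[K_i:\Q_p]\le\deg H_i=e\deg\phi\deg\psi_i$ --- is precisely the standard route taken there; the steps you carry out in detail (the reduction, the degree bookkeeping, the final ramification/residue-degree count) are correct, while the lifting step, which you correctly single out as the real content, remains at the level of a strategy rather than a complete argument.
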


In particular, the set $\pp_{\phi,\lambda}$ splits as the disjoint union: 
$$
\pp_{\phi,\lambda}=\pp_{\phi,\lambda,\psi_1}\coprod\cdots\coprod \pp_{\phi,\lambda,\psi_r},
$$
where $\pp_{\phi,\lambda,\psi}$ is the subset of $\pp_{\phi,\lambda}$ formed by all prime ideals $\p$ such that $R_{\phi,\lambda}(F_\p)$ is a power of $\psi$ in $\F_{p,\phi}[y]$.

\subsection{The $p$-regularity condition}\label{subsecpReg}

\begin{definition}\label{pregular}
Let $\phi\in\Z_p[x]$ be a monic polynomial,  irreducible modulo $p$. We say that $f$ is \emph{$\phi$-regular} if for every side of $N_{\phi}^-(f)$, the residual polynomial attached to the side is squarefree.

Choose monic polynomials $\phi_1,\dots,\phi_t\in\Z_p[x]$ whose reduction modulo $p$ are the different irreducible factors of $f$ modulo $p$.
We say that $f$ is $p$-regular with respect to this choice if $f$ is $\phi_i$-regular for every $1\le i\le t$.
\end{definition}

If $f$ is $p$-regular, Theorems \ref{Diss2} and \ref{Diss3} provide the complete factorization of $f$ into a product of irreducible polynomials in $\Z_p[x]$, or equivalently, the decomposition of $p$ into a product of prime ideals of $K$. 

Moreover, in the $p$-regular case the $p$-index of $f$ is also determined by the shape of the different $\phi$-Newton polygons.

\begin{definition}\label{phindex}
The \emph{$\phi$-index of $f$} is $\deg \phi$ times the number of points with integer coordinates that lie below or on the polygon $N_{\phi}^-(f)$, strictly above the horizontal axis, and strictly beyond the vertical axis. We denote this number by $\ \ind_{\phi}(f)$.
\end{definition}

\begin{theorem}\label{index}
Let $\ind_p(f):=v_p\left(\left(\Z_K\colon\Z[\t]\right)\right)$.
With the above notation, $\ind_p(f)\ge\ind_{\phi_1}(f)+\cdots+\ind_{\phi_t}(f)$, and
equality holds if $f$ is $p$-regular.\hfill{$\Box$}
\end{theorem}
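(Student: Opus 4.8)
The plan is to pass to the $p$-adic setting and treat one modular factor at a time. Since $\ind_p(f)=v_p\big((\Z_K:\Z[\t])\big)$ is the $\Z_p$-length of $(\Z_K\otimes_\Z\Z_p)\big/(\Z[\t]\otimes_\Z\Z_p)$, I would use Hensel's lemma $f=F_1\cdots F_t$ (with the $F_i$ pairwise coprime in $\zpx$) to split $\Z[\t]\otimes\Z_p=\zpx/(f)$, by the Chinese Remainder Theorem, as $\prod_i\zpx/(F_i)$ and, after taking integral closures inside the \'etale $\Q_p$-algebra $\Q_p[x]/(f)$, to split $\Z_K\otimes\Z_p$ compatibly as $\prod_i\mathcal O_i$; hence $\ind_p(f)=\sum_i\operatorname{length}_{\Z_p}\!\big(\mathcal O_i\big/(\zpx/(F_i))\big)$. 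The cofactor $f/F_i$ is prime to $\phi_i$ modulo $p$, so its principal $\phi_i$-polygon is a single point, and by the additivity of $\phi$-Newton polygons and residual polynomials under products, $\npp(f)=\npp(F_i)$ as $\phi_i$-polygons, $\ind_{\phi_i}(f)=\ind_{\phi_i}(F_i)$, and $f$ is $\phi_i$-regular if and only if $F_i$ is. So it will be enough to fix one factor and show, writing $F=F_i$, $\phi=\phi_i$, $m=\deg\phi$, $F\equiv\phi^\ell\md p$, and $\mathcal O_F$ for the integral closure of $\zpx/(F)$ in $\Q_p[x]/(F)$, that $\operatorname{length}_{\Z_p}\big(\mathcal O_F/(\zpx/(F))\big)\ge\ind_\phi(F)$, with equality when $F$ is $\phi$-regular.

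For the inequality I would exhibit, following the \emph{method of the quotients} of \cite{np4}, an intermediate $\Z_p$-lattice $\zpx/(F)\subseteq\mathcal M\subseteq\mathcal O_F$ of index $p^{\,\ind_\phi(F)}$. Writing $F=\sum_s b_s\phi^s$ for the $\phi$-expansion and $\t$ for a root of $F$, for each abscissa $1\le i<\ell$ at which the height $y_i$ of $\npp(F)$ is $\ge1$ I would form $\beta_i=g_i(\t)/p^{\lfloor y_i\rfloor}$, where $g_i\in\zpx$ is assembled from the truncations $\sum_{s\ge i}b_s\phi^{s-i}$, has $\phi$-degree $i$, and has leading $\phi$-coefficient prime to $p$; the convexity of $\npp(F)$, the values $v_p(b_s)$, and the relation $F(\t)=0$ are exactly what force $v_p\big(g_i(\t')/p^{\lfloor y_i\rfloor}\big)\ge0$ for every root $\t'$ of $F$, so that $\beta_i\in\mathcal O_F$. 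Taking $\mathcal M=\zpx/(F)+\sum_i\sum_{a=0}^{m-1}\Z_p\,\t^a\beta_i$, the shape of the $g_i$ makes $\operatorname{length}_{\Z_p}(\mathcal M/(\zpx/(F)))$ equal to $m\sum_i\lfloor y_i\rfloor$, which is $\ind_\phi(F)$ by Definition \ref{phindex} read as a sum over the vertical columns $x=i$. Summing over $i$ gives $\ind_p(f)\ge\sum_i\ind_{\phi_i}(f)$.

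For the equality when $f$ is $p$-regular I would invoke $2\,\ind_p(f)=v_p(\dsc f)-v_p(d_K)$, with $d_K$ the discriminant of $K$. Under $p$-regularity Theorems \ref{Diss2} and \ref{Diss3} yield the complete factorization of $f$ into irreducibles over $\zpx$, together with all the slopes, the (squarefree) residual polynomials, and the ramification indices and residue degrees of the associated $p$-adic fields. From this data I would compute $v_p(\dsc f)$ via multiplicativity of the resultant together with Ore's formula for the discriminant valuation of a one-sided $\phi$-polygon with separable residual polynomial, and $v_p(d_K)=\sum_{\p\mid p}v_p(\mathfrak d_{K_\p/\Q_p})$ from the structure of each completion $K_\p$ (separability of the residual polynomials controls the wild part of the ramification precisely in terms of the slope denominators). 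Comparing the two computations, everything in $v_p(\dsc f)$ beyond $2\sum_i\ind_{\phi_i}(f)$ is cancelled by $v_p(d_K)$; equivalently, one checks $v_p(\dsc\mathcal M)=v_p(\mathfrak d_{K_F/\Q_p})$, i.e.\ $\mathcal M=\mathcal O_F$. This yields $\ind_p(f)=\sum_i\ind_{\phi_i}(f)$.

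The hard part will be the construction in the second step: choosing the numerators $g_i$ and verifying \emph{simultaneously} that each $\beta_i$ is integral and that the $\beta_i$ together with their $\t^a$-multiples are independent enough for $\mathcal M$ to have index \emph{exactly} $p^{\ind_\phi(F)}$ — this is where the arithmetic of the slopes $h/e$ and of the residual coefficients has to be handled with care, essentially transcribing the relevant part of \cite{np4} to the present single-$\phi$ situation. Once that is available, the equality statement becomes a matter of discriminant bookkeeping, and $\phi$-regularity enters exactly as the hypothesis that makes Ore's valuation formula for $\dsc f$ sharp.
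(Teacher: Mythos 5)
The paper states Theorem \ref{index} with a terminal $\Box$ and no argument: it is one of Ore's classical results, recalled from \cite{GMN}, so there is no in-paper proof to compare against and your outline must stand on its own. Its first two steps do. The CRT reduction to a single Hensel factor $F=F_i$ is correct (the cofactor $f/F_i$ is coprime to $\phi_i$ modulo $p$, so principal polygons, residual polynomials and $\phi_i$-regularity pass from $f$ to $F_i$), and the lower bound via the method of quotients is sound: in the single-factor setting each quotient $g_i$ is \emph{monic} of degree $(\ell-i)\deg\phi$, so the elements $\t^a g_i(\t)$ form a triangular $\Z_p$-basis of $\zpx/(F)$ and your lattice $\mathcal M$ has index exactly $p^{\deg\phi\sum_i\lfloor y_i\rfloor}=p^{\ind_\phi(F)}$ over it, while integrality of the $\beta_i$ is the same two-case convexity argument the paper runs (in the mod-$N$ setting) in Proposition \ref{denominator}. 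This half is essentially the computation the paper itself performs later for Theorem \ref{main}.

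The genuine gap is the equality half. As written it rests on ``Ore's formula for the discriminant valuation of a one-sided $\phi$-polygon with separable residual polynomial'' together with the assertion that everything in $v_p(\dsc f)$ beyond $2\sum_i\ind_{\phi_i}(f)$ is cancelled by $v_p(d_K)$. But via $v_p(\dsc f)=2\ind_p(f)+v_p(d_K)$ that cancellation \emph{is} Theorem \ref{index} restated, and the quoted formula of Ore is a theorem of the same depth as the one being proved; invoking it makes the argument circular unless it is established independently. To close the proof one must actually (i) compute $v_p(\op{Res}(H,H'))$ for every pair of distinct irreducible $p$-adic factors $H,H'$ of $f$ in terms of their slopes and residual polynomials, (ii) compute $v_p(\dsc H)$ minus the valuation of the different of $K_H/\Q_p$ for each irreducible factor with one-sided polygon of slope $h/e$ and degree-one residual factor (this is where squarefreeness of the residual polynomial enters, and where wild ramification must be controlled), and (iii) verify the combinatorial identity matching the resulting double sum over pairs of root-groups with the lattice-point count defining $\ind_\phi(f)$. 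Step (iii) is the heart of Ore's argument and is entirely absent from the proposal; equivalently, proving directly that $\mathcal M=\mathcal O_F$ in the regular case requires the same genuine work with residual polynomials, not just bookkeeping.
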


\section{Squarefree decomposition of polynomials modulo $N$}\label{secSFD}
\subsection{Squarefree decomposition in $(\Z/N\Z)[x]$}
Let $N>1$ be an integer and denote $A=\Z/N\Z$. We indicate simply with a bar the homomorphisms of reduction modulo $N$:
$$\hphantom{m}^{\overline{\hphantom{m}}}\,\colon\Z\lra A,\qquad \hphantom{m}^{\overline{\hphantom{m}}}\,\colon\Z[x]\lra A[x].$$

Also, for any prime number $p$ dividing $N$ we denote by the same symbol:
$$\rd_p\colon A\lra \F_p,\quad \rd_p\colon A[x]\lra \F_p[x]$$ the homomorphisms of reduction modulo $p$.

For an arbitrary $a\in A$ we define $\gcd(a,N)\in\Z_{>0}$ to be the unique positive divisor $m$ of $N$ such that $a$ is equal to $\overline{m}$ times a unit in $A$. 

Our first aim is to show that, under certain natural assumptions, there is a standard squarefree decomposition in the polynomial ring $\ax$.

\begin{definition}
A polynomial $g\in \ax$ is said to be \emph{almost-monic} if its leading coefficient is a unit in $A$.    
\end{definition}

If $g\in \ax$ is almost-monic, we may consider a $\op{Quotrem}$ routine:
$$
q,r=\op{Quotrem}(h,g)
$$
which for an arbitrary $h\in \ax$ computes $q,r\in \ax$ such that $h=gq+r$ and $\deg r<\deg g$. Clearly, 
\begin{equation}\label{qr}
 h\ax+g\ax=r\ax+g\ax.
\end{equation}

Also, an almost-monic  $g$ is a \emph{minimal} polynomial:
\begin{equation}\label{minimal}
\deg g=\mn\{\deg h\mid h\in g\ax, h\ne0\}. 
\end{equation}

In particular, for an almost-monic $g$ of positive degree, the chain of ideals generated by the powers of $g$ is strictly decreasing:
$$
\ax\supsetneq g\ax\supsetneq g^2\ax\supsetneq \cdots
$$
Hence, it makes sense to consider a function
$$
\ord_g\colon \ax\longrightarrow \Z_{\ge0}
$$
by defining $\ord_g(h)=k$ if $h\in g^k\ax$ but $h\not \in g^{k+1}\ax$. 

We may define a $\gcd$ routine for polynomials in $\ax$, with ``hooks" to detect a factorization of $N$. 

\Algo{GCD$_0$}{gcd0}
{$f,g\in\ax$ 
}
{Either a proper divisor of $N$, or a monic $d\!=\!\gcd_0(f,g)\!\in\!\ax$ such that $f\ax+g\ax=d\ax$. }
{
\begin{enumerate}
\item \quad while $g\ne0$ 
\item \qquad\quad $a\gets$ leading coefficient of $g$, \ $b\gets \gcd(a,N)$
\item \qquad\quad if $b\ne 1$ then return $b$ \ else \ $g\gets a^{-1}g$
\item \qquad\quad $q,\,r=\op{Quotrem}(f,g)$
\item \qquad\quad $f\gets g$, \ $g\gets r$
\item \quad return $f$
\end{enumerate}
}

Note that the identity $f\ax+g\ax=d\ax$ is an immediate consequence of (\ref{qr}) applied to each division with remainder. From this identity we deduce the following fundamental property.

\begin{lemma}\label{gcd}
Suppose the $\gcd_0$ routine does not factorize $N$ and outputs a polynomial $d\in \ax$. Then, for any prime divisor $p$ of $N$, the polynomial $\rd_p(d)$ is the greatest common divisor of $\rd_p(f)$ and $\rd_p(g)$ in $\F_p[x]$.    
\end{lemma}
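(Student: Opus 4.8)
The plan is to derive the statement purely formally from the ideal identity $f\ax+g\ax=d\ax$ noted just above, by transporting it along the reduction homomorphism $\rd_p\colon\ax\to\F_p[x]$. This lets us avoid following the Euclidean steps of $\gcd_0$ one at a time modulo $p$: the normalizations $g\gets a^{-1}g$ and the divisions with remainder enter the argument only through the already-established global identity.

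First I would observe that $\rd_p$ is a surjective ring homomorphism, so that for any $h_1,h_2\in\ax$ we have $\rd_p(h_1\ax)=\rd_p(h_1)\F_p[x]$ and, since $\rd_p(a)$ runs over all of $\F_p[x]$ as $a$ runs over $\ax$, also $\rd_p(h_1\ax+h_2\ax)=\rd_p(h_1)\F_p[x]+\rd_p(h_2)\F_p[x]$. Applying this to $f\ax+g\ax=d\ax$ gives
$$
\rd_p(f)\,\F_p[x]+\rd_p(g)\,\F_p[x]=\rd_p(d)\,\F_p[x].
$$
Since $\F_p[x]$ is a Euclidean domain, the left-hand ideal is generated by $\gcd(\rd_p(f),\rd_p(g))$, so $\rd_p(d)$ equals this gcd up to a nonzero scalar factor.

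To promote this to an exact equality of monic polynomials, I would use that the output $d$ is monic: its leading coefficient is $1\in A$, whose image in $\F_p$ is $1\ne 0$, so $\rd_p(d)$ has the same degree as $d$ and is again monic. Hence $\rd_p(d)$ is precisely the monic greatest common divisor of $\rd_p(f)$ and $\rd_p(g)$ in $\F_p[x]$. There is no real obstacle here; the only points deserving attention are the surjectivity step (so that the ideal generated by $f$ and $g$ maps onto the ideal generated by their reductions) and the preservation of monicity under $\rd_p$, which is exactly what rules out a spurious scalar. One may also note the trivial edge case in which the loop of $\gcd_0$ is never entered, where $d=f$ and the same reasoning applies verbatim.
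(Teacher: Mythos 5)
Your argument is correct and follows exactly the route the paper intends: the paper asserts the lemma as a consequence of the identity $f\ax+g\ax=d\ax$, and your proof simply makes that deduction explicit by pushing the identity through the surjective homomorphism $\rd_p$ and invoking the B\'ezout property of the principal ideal domain $\F_p[x]$, with monicity of $d$ fixing the normalization. No gap; this matches the paper's (implicit) proof.
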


With this $\gcd_0$ routine in hand, we can mimic the standard squarefree decomposition routine \cite[\S20.3]{shoup}, for polynomials of not too large degree. Let us be precise about the meaning of ``squarefree".

\begin{definition}
An almost-monic polynomial $g\in \ax$ is said to be \emph{squarefree} if $\rd_p(g)\in\F_p[x]$ is squarefree for all prime divisors $p$ of $N$.  
\end{definition}

\Algo{SFD$_0$}{sfd0}
{A monic $f\in\ax$ with $\deg f<p$ for all prime divisors $p$ of $N$. 
}
{Either a proper divisor of $N$ or a list $(g_1,\ell_1),\dots,(g_m,\ell_m)$, where $g_1,\dots,g_m\in\ax$ are monic, squarefree, pairwise coprime polynomials, and the integers $0\le \ell_1<\cdots <\ell_m$ satisfy $f=g_1^{\ell_1}\cdots g_m^{\ell_m}$. }
{
\begin{enumerate}
\item \quad $L\gets [\;]$ 
\item \quad $j\gets 1$, \ $g\gets f/\gcd_0(f,f')$
\item \quad while $f\ne 1$ do
\item \qquad\quad $f\gets f/g$, \ $h\gets\gcd_0(f,g)$, \ $t\gets g/h$
\item \qquad\quad if $t\ne 1$ then append $(t,j)$ to $L$
\item \qquad\quad $g\gets h$, \ $j\gets j+1$
\item \quad return $L$
\end{enumerate}
}

Of course, although not specifically indicated, after every call to $\gcd_0$ the routine ends if we find a proper divisor of $N$. 

As an immediate consequence of Lemma \ref{gcd}, we get a similar statement for this routine.

\begin{lemma}\label{sfd}
Suppose the SFD$_0$ routine does not factorize $N$ and outputs a list of pairs $(g_1,\ell_1),\dots,(g_m,\ell_m)$. Then, for any prime divisor $p$ of $N$, the list $(\rd_p(g_1),\ell_1),\dots,(\rd_p(g_m),\ell_m)$ is the canonical squarefree decomposition of $\rd_p(f)$ in $\F_p[x]$.    
\end{lemma}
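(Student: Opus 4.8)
The plan is to show that, for every prime divisor $p$ of $N$, applying $\rd_p$ to the whole execution of SFD$_0$ on the input $f$ reproduces, step by step, the execution of the classical squarefree decomposition routine of \cite[\S20.3]{shoup} on the input $\rd_p(f)\in\F_p[x]$; the assertion then follows at once from the correctness of that classical routine, which applies because the input specification guarantees $\deg\rd_p(f)<p$ (and the same bound holds for every polynomial that arises, since they all divide $f$).

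First I would record two elementary observations that make $\rd_p$ well behaved along the computation. One: every value taken by the variables $f,g,h,t$ is monic --- this is clear for the initial data and for $g$ after step 2 (a quotient of a monic polynomial by the monic output of $\gcd_0$), and steps 4 and 6 only divide monic polynomials by monic ones; a routine induction also shows that all these divisions are exact. Two: if $v\in\ax$ is monic and $v\mid u$ in $\ax$, then the quotient $u/v$ is uniquely determined and $\rd_p(u/v)=\rd_p(u)/\rd_p(v)$, because $\rd_p(v)$ is again monic. Combining these with Lemma \ref{gcd} --- which, since by hypothesis no call to $\gcd_0$ factors $N$, gives $\rd_p(\gcd_0(u,v))=\gcd(\rd_p(u),\rd_p(v))$ --- and with $\rd_p(f')=\rd_p(f)'$, after step 2 the state $(f,g)$ reduces mod $p$ to $\bigl(\rd_p(f),\,\rd_p(f)/\gcd(\rd_p(f),\rd_p(f)')\bigr)$, which is exactly the state of the classical routine just after initialization.

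Then I would run an induction on the iterations of the \textbf{while} loop, with invariant ``the current values of $f,g,j,L$ reduce mod $p$ to the current values in the classical routine''. Step 4 computes $f/g$, $\gcd_0(f,g)$ and $g/h$, each operation compatible with $\rd_p$ by the observations above and Lemma \ref{gcd}, and step 6 is a relabelling, so the invariant is preserved across one iteration. The point that needs care --- and, being honest, the only genuinely delicate part of the whole argument --- is that the two loops iterate equally often and make the same choice in step 5. Both loops halt exactly when the current value of $f$ equals $1$; that value is monic, so $\rd_p(f)=1$ iff $\deg f=0$ iff $f=1$, and the two loops therefore perform the same number of iterations. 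Likewise the value of $t$ tested in step 5 is monic, so $t\ne1$ in $\ax$ iff $\deg t\ge1$ iff $\rd_p(t)\ne1$, whence $(t,j)$ is appended over $A$ precisely when $(\rd_p(t),j)$ is appended in the classical routine. Hence the output list reduces mod $p$, component by component, to the output of the classical routine on $\rd_p(f)$, which by \cite[\S20.3]{shoup} is the canonical squarefree decomposition of $\rd_p(f)$; everything beyond the bookkeeping just described is a transcription of Lemma \ref{gcd} through the classical algorithm.
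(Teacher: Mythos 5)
Your proof is correct and takes essentially the same route the paper intends: the paper derives Lemma \ref{sfd} as an immediate consequence of Lemma \ref{gcd}, i.e., exactly the observation that reducing the whole run of SFD$_0$ modulo $p$ reproduces, step by step, the classical squarefree decomposition algorithm applied to $\rd_p(f)$ in $\F_p[x]$. Your write-up just makes explicit the bookkeeping the paper leaves implicit (monicity of all intermediate polynomials, exactness of the divisions via the ideal identity $f\ax+g\ax=d\ax$, and the matching of the loop and branch tests), and it is sound.
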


In particular, this result justifies that the output polynomials $g_1,\dots,g_m\in\ax$ are squarefree and pairwise coprime.

\begin{remark}\rm
The condition $\deg f<p$ for all $p\mid N$ is quite reasonable for the construction of a global integral basis in a number field. In this context, $f$ is the reduction modulo $N$ of the defining polynomial of the number field and $N$ is a positive divisor of the discriminant. We may first remove from the discriminant all prime factors $p\le \deg f$ and then proceed with $N$ equal to the remaining factor.  
\end{remark}

\subsection{Squarefree decomposition of polynomials with coefficients in a finite extension of $A$}

Let us fix a monic squarefree polynomial $g\in\ax$. Consider the finite $A$-algebra
$A_1:=A[x]/(g)$. Clearly, $g\ax\cap A=0$, so that the natural map $A\to A_1$ is injective.

We want to describe a squarefree decomposition routine for polynomials
with coefficients in $A_1$.

A polynomial in $\a1x$ is said to be \emph{almost-monic} if it has a unitary leading coefficient. Almost-monic polynomials in $\a1x$ have completely analogous properties as those mentioned in the last section for polynomials in $\ax$.

Let $p$ be a prime factor of $N$. The $A$-algebra $A_1/pA_1$ is now a product of finite fields. In fact, if $\rd_p(g)=\varphi_1\cdots \varphi_t$ is the factorization of $\rd_p(g)$ into a product of monic ireducible polynomials in $\F_p[x]$, we have
$$
A_1/pA_1\,\simeq\, \F_p[x]/(\varphi_1) \times \cdots \times \F_p[x]/(\varphi_t). 
$$

In other words, the maximal ideals of $A_1$ are of the form $(p,\phi) \md{g}$, where $p$ is a prime divisor of $N$ and the reduction of $\phi\in \ax$ modulo $p$ is an irreducible factor of $\rd_p(g)$. \medskip

\noindent{\bf Notation.} 
For any maximal ideal $\m$ of $A_1$ we shall denote $\F_\m:=A_1/\m$ the corresponding finite field, and 
$$\rd_\m\colon A_1\lra \F_\m,\qquad  \rd_\m\colon \a1x\lra \F_\m[x],
$$
the homomorphisms of reduction modulo $\m$.
\medskip

Let us now discuss how to detect units in $A_1$.
Let $\alpha\in A_1$ be a non-zero element. Let $a\in \ax$ be a polynomial whose class modulo $g$ is $\alpha$. If we apply the routine GCD$_0$ to $a$ and $g$, the output has three possibilities:\medskip

(1) $\gcd_0(a,g)=1$,

(2) $\gcd_0(a,g)=b$, with $b\in\ax$ monic of positive degree,

(3) a factorization of $N$ has been detected.\medskip

In case (1), $\alpha$ is a unit in $A_1$, and $\alpha^{-1}$ may be computed from a B\'ezout identity $ra+sg=1$ in $\ax$, which may be obtained from an extended $\gcd_0$ implementation. 

In case (2), $b$ is a proper factor of $g$ in $\ax$, because $\alpha\ne0$.

Therefore, if $\alpha$ is not a unit, we gain relevant information about $N$ or $g$. 
This facilitates the design of a $\gcd$ routine with hooks for polynomials in $\a1x$, in the spirit of Algorithm \ref{gcd0}. 

\Algo{GCD$_1$}{gcd1}
{$f,g\in\a1x$ 
}
{Either a proper divisor of $N$, or a proper factor of $g$, or a monic $d=\gcd_1(f,g)\in\a1x$ such that $f\a1x+g\a1x=d\a1x$. }
{
\begin{enumerate}
\item \quad while $g\ne0$ 
\item \qquad\quad $a\gets$ leading coefficient of $g$, \ $b\gets \gcd_0(a,g)$
\item \qquad\quad if $b\ne 1$ then return $b$ \ else \ $g\gets a^{-1}g$
\item \qquad\quad $q,\,r=\op{Quotrem}(f,g)$
\item \qquad\quad $f\gets g$, \ $g\gets r$
\item \quad return $f$
\end{enumerate}
}

Clearly, for any maximal ideal $\m$ of $A_1$, the homomorphism $\rd_\m$ applied to each step of Algorithm \ref{gcd1} yields the standard $\gcd$ algorithm in $\F_\m[x]$. 

\begin{lemma}\label{gcd1good}
Suppose the $\gcd_1$ routine does not factorize $N$ nor $g$, and outputs a polynomial $d\in \a1x$. Then, for any maximal ideal $\m$ of $A_1$, the polynomial $\rd_\m(d)$ is the  greatest common divisor of $\rd_\m(f)$ and $\rd_\m(g)$ in $\F_\m[x]$.    
\end{lemma}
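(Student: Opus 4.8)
The plan is to reduce Lemma \ref{gcd1good} to the behavior of the standard Euclidean algorithm over each residue field $\F_\m$, exactly as Lemma \ref{gcd} reduces Algorithm \ref{gcd0} to the Euclidean algorithm over the fields $\F_p$. The key observation, already noted right before the statement, is that for each maximal ideal $\m$ of $A_1$ the reduction homomorphism $\rd_\m\colon\a1x\to\F_\m[x]$ is a ring homomorphism, and applying it step by step to Algorithm \ref{gcd1} produces precisely the classical $\gcd$ computation in $\F_\m[x]$ \emph{provided} no step does something that $\rd_\m$ fails to commute with.

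First I would record the ideal identity $f\a1x+g\a1x=d\a1x$, which is produced by the algorithm itself: each division with remainder satisfies $h=gq+r$ with $\deg r<\deg g$, and the normalization $g\gets a^{-1}g$ in line (3) does not change the ideal $g\a1x$ since $a$ is a unit in $A_1$ in the branch where the routine proceeds; hence the analogue of (\ref{qr}) holds at every step and the generated ideal is preserved throughout the loop. Applying $\rd_\m$ to this identity gives $\rd_\m(f)\F_\m[x]+\rd_\m(g)\F_\m[x]=\rd_\m(d)\F_\m[x]$ in the PID $\F_\m[x]$. Since $d$ is monic in $\a1x$, its image $\rd_\m(d)$ is monic of the same degree in $\F_\m[x]$ (the leading coefficient $1$ maps to $1\ne 0$), so $\rd_\m(d)$ is the monic generator of that ideal, i.e.\ the $\gcd$ of $\rd_\m(f)$ and $\rd_\m(g)$. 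That is the whole statement.

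The point that needs care --- and this is the main obstacle --- is justifying that the hypothesis ``the routine does not factorize $N$ nor $g$'' is exactly what is needed for $\rd_\m$ to commute with every operation performed. Concretely, in line (2) we set $b\gets\gcd_0(a,g)$ where $a$ is the leading coefficient of the current remainder $g\in A_1$; by the trichotomy recalled above for detecting units in $A_1$, if $b\ne 1$ then either $\gcd_0$ has output a proper divisor of $N$ or $b$ is a proper monic factor of $g$ in $\ax$ --- in both cases the routine returns and the hypothesis of the lemma is violated. So under the hypothesis we are always in case (1): $a$ is a unit in $A_1$, $a^{-1}$ exists, and $\rd_\m(a^{-1}g)=\rd_\m(a)^{-1}\rd_\m(g)$ with $\rd_\m(a)\ne 0$; thus $\rd_\m$ carries the normalization step to the corresponding normalization in $\F_\m[x]$, and $\rd_\m(\op{Quotrem}(f,g))$ is the Euclidean quotient-remainder pair in $\F_\m[x]$ because division by an almost-monic polynomial commutes with any ring homomorphism that keeps the leading coefficient invertible. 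Here one uses Lemma \ref{gcd} applied inside $\ax$ to know that the inner $\gcd_0$ calls, when they do not return, behave correctly modulo every prime $p\mid N$; combined with the description of the maximal ideals of $A_1$ as $(p,\phi)\md g$, this guarantees the unit-detection in $A_1$ is faithful modulo each $\m$. Once every loop iteration is seen to map under $\rd_\m$ to the classical algorithm, an induction on the number of iterations (the sequences of degrees strictly decrease, so the loop terminates) completes the argument.

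Finally I would remark that the same reasoning shows termination of $\gcd_1$ itself under the hypothesis: the sequence of polynomials $g$ produced has strictly decreasing degree (after the unit normalization), so the loop halts, and the returned $f$ is monic because the last nonzero remainder was normalized to be monic before becoming the output. Thus the conclusion of the lemma is well posed, and its proof is the componentwise transport of the Euclidean algorithm across the finite family of reductions $\{\rd_\m\}_\m$, which is legitimate precisely because the ``hooks'' in Algorithms \ref{gcd0} and \ref{gcd1} abort in exactly those cases where such transport would otherwise fail.
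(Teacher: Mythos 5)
Your proposal is correct and follows essentially the same route as the paper, which justifies the lemma by the ideal identity $f\a1x+g\a1x=d\a1x$ together with the observation that $\rd_\m$ carries each step of GCD$_1$ to the standard algorithm in $\F_\m[x]$; your version simply spells out the details (unit detection via $\gcd_0$, monicity of $d$ under reduction). No further comment is needed.
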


\begin{definition}
A almost-monic polynomial $h\in A_1[x]$ is said to be squarefree if $\rd_\m(h)\in\F_\m[x]$ is squarefree for all maximal ideals $\m$ of $A_1$. 
\end{definition}

We obtain an algorithm SFD$_1$ to compute squarefree decomposition in $\a1x$, just by replacing $A$ with $A_1$ and $\gcd_0$ with $\gcd_1$ in Algorithm \ref{sfd0}. 

\begin{lemma}\label{sf1}
Suppose the SFD$_1$ routine does not factorize $N$ nor $g$ and outputs a list of pairs $(g_1,\ell_1),\dots,(g_k,\ell_k)$. Then, for any maximal ideal $\m$ of $A_1$, the list $(\rd_\m(g_1),\ell_1),\dots,(\rd_\m(g_k),\ell_k)$ is the canonical squarefree decomposition of $\rd_\m(f)$ in $\F_\m[x]$.    
\end{lemma}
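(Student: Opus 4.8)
The plan is to reduce Lemma~\ref{sf1} to Lemma~\ref{sfd} by the same reduction that was used to obtain SFD$_1$ from SFD$_0$, only now ``base field $\F_p$'' is replaced by ``base field $\F_\m$''. The starting observation is that SFD$_1$ is literally Algorithm~\ref{sfd0} with every occurrence of $\gcd_0$ replaced by $\gcd_1$ and the coefficient ring $A$ replaced by $A_1$. So, fix a maximal ideal $\m$ of $A_1$ and apply the reduction homomorphism $\rd_\m\colon A_1[x]\to\F_\m[x]$ to each instruction executed by SFD$_1$.

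First I would check, step by step, that $\rd_\m$ transforms the execution trace of SFD$_1$ on input $f$ into the execution trace of the classical squarefree decomposition algorithm (\cite[\S20.3]{shoup}) on input $\rd_\m(f)$ over the field $\F_\m$. The two ingredients needed are: (i) $\rd_\m$ commutes with the polynomial operations used in Algorithm~\ref{sfd0}, namely exact division $f/g$ and $\gcd$; and (ii) the divisions $f/g$ performed in SFD$_1$ are genuine exact divisions whose quotient reduces correctly — this follows because at each such step $g$ is (by induction) monic, so division with remainder is stable under any ring homomorphism, and exactness over $A_1$ forces exactness over $\F_\m$ with the reduced quotient. For the $\gcd$ calls, I would invoke Lemma~\ref{gcd1good}: under the running hypothesis that SFD$_1$ factorizes neither $N$ nor $g$, every call to $\gcd_1$ returns a monic $d\in A_1[x]$ with $\rd_\m(d)=\gcd(\rd_\m(\cdot),\rd_\m(\cdot))$ in $\F_\m[x]$. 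Since $\F_\m$ is a field, the classical algorithm is known to be correct, so its output list $(\bar g_1,\ell_1),\dots,(\bar g_k,\ell_k)$ over $\F_\m$ is exactly the canonical squarefree decomposition of $\rd_\m(f)$; and by the trace-matching this list is precisely $(\rd_\m(g_1),\ell_1),\dots,(\rd_\m(g_k),\ell_k)$.

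One subtlety I would be careful about is that the number of iterations of the while loop, and the lengths of the output lists, could a priori differ between the SFD$_1$ run and the reduced run — if, say, some polynomial $t$ that is $\ne 1$ in $A_1[x]$ reduced to $1$ in $\F_\m[x]$, that pair would be appended to $L$ but would be invisible after reduction. However, the monic polynomials $g_i$ are built as honest quotients of $\gcd$'s that are themselves monic, so each $g_i$ is monic of the degree dictated by $\rd_\m(f)$'s squarefree structure — no degree can collapse under $\rd_\m$ because leading coefficients stay equal to $1$. This keeps the loop lengths synchronized and shows no spurious pairs appear (equivalently, one can argue the degrees: $\sum \ell_i\deg(\rd_\m g_i)=\deg\rd_\m(f)=\deg f=\sum\ell_i\deg g_i$, and since $\deg\rd_\m(g_i)\le\deg g_i$ for each $i$ one gets equality termwise). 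I expect this bookkeeping — making the two execution traces line up index-for-index rather than just ``up to reductions'' — to be the only real point requiring care; everything else is a direct transfer of the field case through the ring homomorphism $\rd_\m$, exactly as Lemma~\ref{sfd} was obtained from Lemma~\ref{gcd}.

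Finally, I would note that the conclusion that the $g_i$ are squarefree and pairwise coprime in $A_1[x]$ is then immediate: for every maximal ideal $\m$, the reductions $\rd_\m(g_i)$ are squarefree and pairwise coprime in $\F_\m[x]$ (being the canonical squarefree decomposition there), which is precisely the definition of squarefree and of coprimality checked modulo all maximal ideals. This records the parenthetical claim made right after the statement of the SFD$_1$ routine.
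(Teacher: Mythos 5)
Your proposal is correct and follows exactly the route the paper intends: the paper gives no explicit proof of Lemma~\ref{sf1}, presenting it as the immediate analogue of Lemma~\ref{sfd} obtained by pushing the execution trace of SFD$_1$ through $\rd_\m$ and invoking Lemma~\ref{gcd1good} for the $\gcd$ calls. Your extra care about monicity keeping the loop conditions and degrees synchronized under reduction is exactly the bookkeeping the paper leaves implicit.
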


\section{Newton polygons modulo $N$}\label{secNewton}
We fix from now on a monic irreducible polynomial $f\in\Z[x]$ of degree $n>1$ and an integer $N>1$ whose prime divisors are all greater than $n$.

Let $K=\Q(\t)$ be the number field generated by a root $\t\in\qb$ of $f$, and denote by $\Z_K$ the ring of integers of $K$. Let $\pp_N\subset\op{Spec}(\Z_K)$ be the set of prime ideals of $\Z_K$ dividing $N$.

\subsection{$N$-adic valuation}
Let us consider the following $N$-adic valuation routine with hooks.

\Algo{$N$-adic valuation $v_N$}{vN}
{$a\in \Z$, $a\ne0$. 
}
{Either a proper divisor of $N$, or a non-negative integer $k=v_N(a)$ such that $a=N^kb$ with $b$ coprime to $N$. }
{
\begin{enumerate}
\item \quad $q\gets a$, \ $r\gets 0$, \ $\op{value}\gets -1$  
\item \quad while $r=0$ 
\item \qquad\quad $\op{value}\gets\op{value}+1$
\item \qquad\quad $q,\,r=\op{Quotrem}(q,N)$
\item \quad $d\gets \gcd(r,N)$
\item \quad if $d\ne1$ then return $d$ else return $\op{value}$
\end{enumerate}
}

\begin{lemma}\label{vNgood}
Suppose the $v_N$ routine does not factorize $N$ and outputs $v_N(a)=k$. Then,  $v_p(a)=kv_p(N)$  for all prime divisors $p$ of $N$.    
\end{lemma}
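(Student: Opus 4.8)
The plan is to extract from the algorithm an explicit factorization $a=N^{k}b$ with $b$ coprime to $N$, after which the valuation identity is immediate. First I would name the quantities produced by the loop: set $q^{(0)}=a$, $r^{(0)}=0$, and observe that the $i$-th pass through the loop sets $\op{value}$ to $i-1$ and computes $q^{(i-1)}=Nq^{(i)}+r^{(i)}$ with $0\le r^{(i)}<N$ via the $\op{Quotrem}$ call. Since a pass is entered precisely when the previous remainder vanished, if the loop performs $m$ passes in total then $r^{(1)}=\cdots=r^{(m-1)}=0$ and $r^{(m)}\ne 0$; the loop does terminate because $a\ne0$ is divisible by only finitely many powers of $N$. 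It follows that $q^{(i-1)}=Nq^{(i)}$ for $1\le i\le m-1$, hence $a=N^{m-1}q^{(m-1)}$, while $q^{(m-1)}=Nq^{(m)}+r^{(m)}$ with $r^{(m)}\ne0$.

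Next I would invoke the hypothesis that the routine does not factorize $N$. On exiting the loop it forms $d=\gcd(r^{(m)},N)$; as $0<r^{(m)}<N$, a value $d\ne1$ would be a proper divisor of $N$ returned by the routine, against the hypothesis. So $d=1$ and the routine returns $\op{value}=m-1$, that is, $k=m-1$. Moreover $q^{(m-1)}\equiv r^{(m)}\pmod N$ gives $\gcd(q^{(m-1)},N)=\gcd(r^{(m)},N)=1$, so setting $b:=q^{(m-1)}$ we obtain $a=N^{k}b$ with $\gcd(b,N)=1$.

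Finally, for a prime divisor $p$ of $N$ I would simply compute $v_p(a)=v_p(N^{k}b)=k\,v_p(N)+v_p(b)$; since $p\mid N$ and $\gcd(b,N)=1$ force $p\nmid b$, we get $v_p(b)=0$, and therefore $v_p(a)=k\,v_p(N)$, as claimed.

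No step is genuinely difficult; the only points needing a little care are the off-by-one relation between $\op{value}$ and the number of loop passes, the termination of the loop, and the remark that a nontrivial common divisor of $r^{(m)}$ (which is $<N$) with $N$ is automatically a \emph{proper} divisor of $N$ — precisely the hook that the hypothesis rules out.
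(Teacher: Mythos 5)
Your proof is correct, and it is exactly the argument the paper has in mind: the paper states Lemma \ref{vNgood} without proof, treating the unwinding of the loop into the factorization $a=N^kb$ with $\gcd(b,N)=1$ as immediate. Your careful handling of the off-by-one count, the termination, and the observation that a nontrivial $\gcd(r,N)$ with $0<r<N$ is automatically a \emph{proper} divisor of $N$ fills in precisely what the authors left implicit.
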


We agree that $v_N(0)=\infty$. Also, we extend this function (which is not a valuation) to a function on $\Z[x]$, in the usual way:
$$
v_N(a_0+a_1x+\cdots+a_sx^s+\cdots)=\mn_{0\le s}\{v_N(a_s)\}.
$$

\subsection{Newton polygons and residual polynomials}
Consider monic po\-lynomials $g_1,\dots,g_m\in\Z[x]$ such that
$$
f\equiv g_1^{\ell_1}\cdots g_m^{\ell_m} \ \md{N}
$$
is the canonical squarefree decomposition of $f$ modulo $N$. 

Since these polynomials are pairwise coprime modulo $N$, we have an analogous of the first dissection:
$$
\pp_N=\coprod_{i=1}^m\pp_{g_i},\quad \pp_g=\left\{\p\in\pp_N\mid v_\p(g(\t))>0\right\}.
$$

Let us fix one of these polynomials; say $g=g_i$ for some $1\le i\le m$, and denote $\ell:=\ell_i=\ord_{\overline{g}}\left(\overline{f}\right)$.

Our polynomial $f$ admits a unique $g$-expansion:
\begin{equation}\label{phiadicN}
f=a_0+a_1g+\cdots+a_rg^r,
\end{equation}
with $a_i\in\Z[x]$ having $\deg a_i<\deg g$. For any coefficient $a_i$ we compute the $N$-adic value $u_i=v_N(a_i)\in\Z\cup\{\infty\}$.

\begin{definition}
The $g$-Newton polygon of $f$ is the lower convex hull of the set of all points $(i,u_i)$, $u_i<\infty$, in the Euclidian plane.
We denote this open convex polygon by $N_g(f)$.
\end{definition}

The \emph{length} of this polygon is by definition the abscissa $r$ of the last vertex. We denote it by  $\ell(N_g(f))$. The typical shape of this polygon is shown in Figure \ref{fig1}.

\begin{definition}
The polygon $N^-_g(f)$ determined by the sides of negative slope of $N_g(f)$ is called the \emph{principal $g$-polygon} of $f$. 
\end{definition}

\begin{lemma}\label{ell=ord}
The length of $N^-_g(f)$ is equal to $\ell=\ord_{\overline{g}}\left(\overline{f}\right)$. \end{lemma}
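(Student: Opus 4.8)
The plan is to reduce this statement to the analogous classical fact (stated just before Theorem~\ref{Diss2}, namely that the length of $\npp(f)$ equals the order with which $\phi$ divides $f$ modulo $p$) by reducing modulo a prime $p\mid N$. First I would recall that, since $g\in\Z[x]$ is monic, the $g$-expansion \eqref{phiadicN} of $f$ is unique in $\Z[x]$, and reducing it modulo $p$ gives the $g$-expansion in $\Z_p[x]$ of the image of $f$; moreover, by the squarefree decomposition hypothesis and Lemma~\ref{sfd}, $\rd_p(g)$ is one of the squarefree factors of $\rd_p(f)$, so $\rd_p(g)=\phi_1\cdots\phi_s$ splits into distinct monic irreducible factors $\phi_j\in\F_p[x]$, none repeated. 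The key point is to compare the $N$-adic Newton polygon $N^-_g(f)$ with the classical $\phi_j$-Newton polygons $N^-_{\phi_j}(f)$ for the lifts $\phi_j\in\Z_p[x]$.

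The heart of the argument is a relation between the valuations. By Lemma~\ref{vNgood}, whenever $v_N$ does not factorize $N$ we have $v_p(a_i)=v_N(a_i)\,v_p(N)$ for every prime $p\mid N$; hence the point $(i,u_i)$ of $N_g(f)$ corresponds, after scaling the ordinate by the positive integer $v_p(N)$, to the point $(i,v_p(a_i))$ relevant to the $p$-adic picture. Scaling all ordinates by a fixed positive constant is an affine transformation that preserves the abscissa of the last vertex of the lower convex hull and, in particular, preserves which vertices have negative slope; therefore $\ell\bigl(N^-_g(f)\bigr)$ equals the length of the principal Newton polygon of $f$ computed $p$-adically with respect to the $g$-expansion. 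It remains to identify that length with $\ord_{\overline g}(\overline f)$. Here I would use that the $g$-expansion of $f$ over $\Z_p$ refines into the $\phi_j$-expansions: concretely, $\ord_{\overline g}(\overline f)=\min_j \ell\bigl(\npp_{\phi_j}\text{-length}\bigr)$ is not quite right, so instead I would argue directly. The abscissa of the last vertex of the $g$-Newton polygon that has $u_i<\infty$ below the horizontal... more precisely, $\ell(N^-_g(f))$ is the largest $i$ with $(i,u_i)$ on a side of negative slope, which is the largest $i$ with $u_i<\infty$ and $u_j>0$ for... the clean statement: $\ell(N^-_g(f))$ equals the largest index $i\le r$ such that $v_N(a_i)=0$ fails for all $j<i$ is awkward; the right formulation is that $\ell(N^-_g(f))$ is the smallest $i$ with $u_i=0$, and this smallest $i$ equals $\ord_{\overline g}(\overline f)$ because $\overline f=\sum \overline{a_i}\,\overline g^{\,i}$ and $\overline{a_i}=0$ in $A[x]$ exactly when $v_N(a_i)\ge 1$, i.e.\ $u_i\ge 1$, while $\deg a_i<\deg g$ forces that the reduction $\rd_p(a_i)$ is nonzero as soon as $u_i=0$.

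So the concrete steps are: (i) fix a prime $p\mid N$ and observe that the $g$-adic expansion reduces correctly mod $p$; (ii) apply Lemma~\ref{vNgood} to see that the $N$-adic polygon and the $p$-adic polygon (for the same $g$-expansion) have the same shape up to a vertical rescaling by $v_p(N)$, hence the same principal-polygon length; (iii) identify $\ell(N^-_g(f))$ with the least $i$ for which $u_i=0$, equivalently the least $i$ for which $\overline{a_i}\ne 0$ in $A[x]$; (iv) since $A[x]\to\F_p[x]$ is surjective and $\deg a_i<\deg g$, this least $i$ equals the least $i$ for which $\rd_p(a_i)\ne0$, which is precisely $\ord_{\rd_p(g)}(\rd_p(f))$; and (v) because $\rd_p(g)$ is squarefree and is one of the squarefree factors $g_i$, this order equals $\ell_i$ by Lemma~\ref{sfd}, i.e.\ $\ell=\ord_{\overline g}(\overline f)$. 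I expect the main obstacle to be purely bookkeeping: making sure that ``$u_i=0$ implies $\overline{a_i}\ne0$'' is airtight (it uses $\deg a_i<\deg g$ together with $g$ monic, so no cancellation occurs in the reduction), and confirming that the last vertex of the full polygon $N_g(f)$ and the last vertex of its negative-slope part $N^-_g(f)$ coincide up to the index $\ell$ — i.e.\ that all points $(i,u_i)$ with $i\ge \ell$ satisfy $u_i=0$, so that the polygon becomes horizontal exactly at abscissa $\ell$. This last fact is again immediate from the squarefree decomposition $\overline f=\prod \overline{g_i}^{\,\ell_i}$, since then $\overline f/\overline g^{\,\ell}$ is coprime to $\overline g$, forcing $\overline{a_\ell}\ne0$ and $u_\ell=0$.
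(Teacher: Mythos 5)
Your argument, once the false starts are stripped away, does prove the lemma, but by a genuinely different and more roundabout route than the paper. The paper never leaves $A[x]$: it sets $b=a_0+a_1g+\cdots+a_{\ell-1}g^{\ell-1}$, notes that $\overline{b}\in\overline{g}^{\,\ell}A[x]$ while $\deg b<\deg g^{\ell}$, and concludes $\overline{b}=0$ from the minimality property (\ref{minimal}) of the monic polynomial $\overline{g}^{\,\ell}$; hence $\overline{a}_0=\cdots=\overline{a}_{\ell-1}=0$, so $u_0,\dots,u_{\ell-1}>0$, and $\overline{a}_{\ell}\ne0$ (so $u_\ell=0$) because otherwise $\overline{f}$ would lie in $\overline{g}^{\,\ell+1}A[x]$. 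You instead reduce modulo a prime $p\mid N$, use Lemma~\ref{vNgood} to identify the least $i$ with $u_i=0$ with the least $i$ with $\rd_p(a_i)\ne0$, invoke the classical fact over the field $\F_p$ to equate that index with $\ord_{\rd_p(g)}(\rd_p(f))$, and then Lemma~\ref{sfd} to return to $\ell$. This is valid but buys nothing and adds two dependencies the direct argument avoids: Lemma~\ref{sfd}, and the success of the $v_N$ routine (the $A[x]$ argument only needs $N\mid a_i\Leftrightarrow\overline{a_i}=0$). Two spots to tighten: first, your claim that ``all points $(i,u_i)$ with $i\ge\ell$ satisfy $u_i=0$'' is false and unnecessary --- all you need is $u_\ell=0$ together with $u_i\ge0$ for every $i$, which already rules out sides of negative slope beyond abscissa $\ell$; second, the step ``$\overline{f}/\overline{g}^{\,\ell}$ is coprime to $\overline{g}$, forcing $\overline{a}_\ell\ne0$'' is delicate over the non-domain $A$ and, more importantly, presupposes the vanishing of the lower coefficients: the clean formulation is that $\overline{f}\notin\overline{g}^{\,\ell+1}A[x]$ while every term $\overline{a}_j\overline{g}^{\,j}$ with $j\ne\ell$ does lie in that ideal (the lower ones because they vanish, the higher ones trivially), so $\overline{a}_\ell\overline{g}^{\,\ell}$ cannot.
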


\begin{proof}
Let $b=a_0+a_1g+\cdots +a_{\ell-1}g^{\ell-1}$. Since $\overline{f}$ belongs to the ideal $\overline{g}^\ell\ax$, the element $\overline{b}$ belongs to this ideal too. Since $\deg b<\deg g^\ell$ and $\overline{g}$ is monic, we have $\overline{b}=0$ by (\ref{minimal}). This implies  $\overline{a}_0=\cdots=\overline{a}_{\ell-1}=0$, again because   $\overline{g}$ is monic.

In particular, $u_0,\dots,u_{\ell-1}>0$, and it remains only to show that $u_\ell=0$.
 Since $\overline{f}$ does not belong to the ideal $\overline{g}^{\ell+1}\ax$, we deduce that $\overline{a}_\ell\overline{g}^\ell$ does not belong to this ideal either. Hence, $\overline{a}_\ell\ne0$ and $u_\ell=0$.
\end{proof}

Let $A_1:=\ax/(g)=\Z[x]/(N,g)$ and denote by 
$ \rd_{N,g}\colon \Z[x]\longrightarrow A_1$
the homomorphism of reduction modulo $(N,g)$.
We attach to any integer abscissa $0\le i\le \ell$ the following \emph{residual coefficient} $c_i\in A_1$:
$$\as{1.6}
c_i=\left\{\begin{array}{ll}
0,&\mbox{ if $(i,u_i)$ lies above $N_g(f)$, or }u_i=\infty,\\\rd_{N,g}\left(\dfrac{a_i}{N^{u_i}}\right),&\mbox{ if $(i,u_i)$ lies on }N_g(f).
\end{array}
\right.
$$ Note that $c_i$ is always nonzero in the latter case, because $\deg a_i<\deg g$. 

Let $S$ be one of the sides of $N^-_g(f)$ with slope $-\lambda=-h/e$, where $h,e$ are positive coprime integers. Define the \emph{degree} $d(S)$ of $S$ as in section \ref{subsec3rd}.

\begin{definition}
Let $s$ be the initial abscissa of $S$, and let $d=d(S)$. The \emph{residual polynomial} attached to $S$ (or to $\lambda$) is defined as:
$$
R_{g,\lambda}(f)=c_s+c_{s+e}\,y+\cdots+c_{s+(d-1)e}\,y^{d-1}+c_{s+de}\,y^d\in A_1[y].
$$
\end{definition}

Note that $c_s$ and $c_{s+de}$ are always nonzero, so that the residual polynomial has degree $d$ and is never divisible by $y$.

Ideally, we would like these polygons and residual polynomials to have analogous properties to those stated in Theorems \ref{Diss2} and \ref{Diss3}, but this is not true. However, we still face a win-win situation: when one of these objects fails to have the properties we need for the final computation of an $N$-integral basis, then it yields a factorization of $N$ or $\overline{g}$. The next section is devoted to the discussion of this phenomenon.

\subsection{Admissible $\phi$-expansions}
Let $p$ be one of the prime factors of $N$, and denote the homomorphisms of reduction modulo $p$ by the same symbol:
$$
\rd_p\colon \Z[x]\lra \F_p[x],\qquad \rd_p\colon \Z_p[x]\lra \F_p[x].
$$

Let $\rd_p(g)=\varphi_1\cdots \varphi_t$ be the factorization of $\rd_p(g)$ into a product of monic irreducible polynomials in $\F_p[x]$. By Hensel's lemma, $g$ splits in $\Z_p[x]$ into a product of $t$ monic irreducible polynomials:
$$
g=\phi_1\cdots \phi_t,\qquad \rd_p(\phi_i)=\varphi_i, \ 1\le i \le t.
$$

\begin{lemma}\label{length}
Let $p$ be a prime divisor of $N$ and let $\phi$ be a $p$-adic irreducible factor of $g$.  Let $\pp_\phi=\coprod_\lambda\pp_{\phi,\lambda}$ be the partition of $\pp_\phi$ determined by the slopes of the different sides of $N^-_\phi(f)$ (section \ref{subsec2nd}). 
\begin{enumerate}
\item $\ell(N_\phi^-(f))=\ell(N_g^-(f))$. 
\item For any $\p\in\pp_{\phi,\lambda}$ we have $v_\p(g(\t))=e(\p/p)\lambda$. 
\end{enumerate}
\end{lemma}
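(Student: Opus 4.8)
The statement relates two Newton polygons: $N_g^-(f)$, computed with the $N$-adic ``valuation'' $v_N$ using the $g$-expansion of $f$, and $N_\phi^-(f)$, the honest $p$-adic Newton polygon computed in $\Z_p[x]$ using the $\phi$-expansion, where $\phi\mid g$ is a $p$-adic irreducible factor. The key bridge is Lemma~\ref{vNgood}: whenever $v_N$ succeeds (which we may assume, since otherwise we have factored $N$ and there is nothing to prove), $v_N(a)=k$ forces $v_p(a)=k\,v_p(N)$ for every prime $p\mid N$. So on integers, $v_N$ is just $v_p/v_p(N)$; the subtlety is that the $g$-expansion and the $\phi$-expansion of $f$ are different expansions.

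For part (1), I would argue as follows. Both lengths are determined by divisibility: Lemma~\ref{ell=ord} gives $\ell(N_g^-(f))=\ord_{\overline g}(\overline f)$, the multiplicity of $\overline g$ in $\overline f$ over $\Z/N\Z$; by Lemma~\ref{sfd} (or directly), reducing mod $p$ this equals the multiplicity of $\rd_p(g)$ in $\rd_p(f)$ in $\F_p[x]$. Since $\rd_p(g)=\varphi_1\cdots\varphi_t$ is squarefree ($g$ is squarefree), this common multiplicity equals the multiplicity of each $\varphi_i=\rd_p(\phi_i)$ in $\rd_p(f)$, which by Theorem~\ref{Diss2}'s preamble is exactly $\ell_i=\ell(N_\phi^-(f))$ for $\phi=\phi_i$. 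Hence $\ell(N_g^-(f))=\ell(N_\phi^-(f))$.

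For part (2), take $\p\in\pp_{\phi,\lambda}$, so that (by the dissection, Theorem~\ref{Diss2}(2)) $v_p(\phi(\iota_\p(\t)))=\lambda$. Write $g=\phi\cdot h$ in $\Z_p[x]$ with $h$ the product of the other $\phi_j$'s; then $v_p(h(\iota_\p(\t)))=0$ because the $\phi_j$'s are pairwise coprime mod $p$ and $\p$ ``belongs'' only to $\phi$ (more precisely $\rd_p(h(\iota_\p(\t)))\ne0$ in the residue field, since $\varphi_j(\overline{\iota_\p(\t)})\ne 0$ for $j$ such that $\phi_j\ne\phi$). Therefore $v_p(g(\iota_\p(\t)))=v_p(\phi(\iota_\p(\t)))=\lambda$, and multiplying by $e(\p/p)$ gives $v_\p(g(\t))=e(\p/p)\lambda$ via the compatibility $v_\p=e(\p/p)\,v_p\circ\iota_\p$.

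**Main obstacle.** The routine/bookkeeping is the real content: one must be careful that $v_N$ having ``not factored $N$'' is a hypothesis we are entitled to invoke here (or state the lemma conditionally on that), and that the squarefree decomposition mod $N$ localizes correctly at each $p$ via Lemma~\ref{sfd}. The genuinely delicate point is matching the two expansions: a priori the $N$-adic polygon $N_g^-(f)$ could differ from $N_\phi^-(f)$ in shape (slopes, vertices) even though their lengths agree, and indeed the excerpt warns that these objects need not behave as in Theorems~\ref{Diss2}--\ref{Diss3}. For part (2), though, we never need the full polygon — only the valuation $v_p(g(\iota_\p(\t)))$ at a prime, which factors cleanly through $g=\phi h$ with $h$ a $p$-adic unit at $\p$. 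So the main care is simply in isolating the ``$\phi$-part'' of $g$ $p$-adically and checking the other factors contribute zero valuation; this is routine once Hensel's factorization $g=\phi_1\cdots\phi_t$ and the coprimality mod $p$ are in hand.
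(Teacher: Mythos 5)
Your proposal is correct and follows essentially the same route as the paper: part (1) via $\ell(N_g^-(f))=\ord_{\overline g}(\overline f)$ (Lemma \ref{ell=ord}) matched against $\ord_{\rd_p(\phi)}\rd_p(f)$ using the squarefreeness of $\rd_p(g)$ and the coprimality of the $g_i$, and part (2) via Theorem \ref{Diss2} plus the vanishing of $v_p(\phi_j(\iota_\p(\t)))$ for the other $p$-adic factors of $g$. No gaps.
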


\begin{proof}
We saw in section \ref{subsec2nd} that $\ell(N_\phi^-(f))=\ord_{\rd_p(\phi)}\rd_p(f)=\ell$. Since $\rd_p(g)$ is squarefree and the polynomials $g_1,\dots,g_m$ are pairwise coprime, this length coincides with $\ord_{\overline{g}}\overline{f}$, which is equal to $\ell(N_g^-(f))$ by Lemma \ref{ell=ord}. This proves item (1).

Take $\p\in\pp_{\phi,\lambda}$. By Theorem \ref{Diss2}, we have $v_p(\phi(\iota_\p(\t)))=\lambda$. Since all $p$-adic factors of $g$ are pairwise coprime modulo $p$, we have $v_p(\phi_j(\iota_\p(\t)))=0$ for all $\phi_j\ne\phi$. Hence, $v_p(g(\iota_\p(\t)))=\lambda$. This proves item (2).
\end{proof}

Let us fix one of these $p$-adic irreducible factors; say $\phi=\phi_j$. 

Let $\Phi\in\Z[x]$ be any integer poynomial congruent to $\phi$ modulo $p$, and consider the ideal $(p,\Phi)$. The ideal $\m=\rd_{N,g}(p,\Phi)$ is a maximal ideal of $A_1$. We abuse of language and denote this ideal simply as $\m=(p,\phi)$.

Consider the canonical $\phi$-expansion of $f$:
$$
f=b_0+b_1\phi+\cdots+b_\ell\phi^\ell+ \cdots,\qquad \deg b_s<\deg \phi,\,\forall s\ge0.
$$
Consider now another $\phi$-expansion, not necessarily the canonical one:
\begin{equation}\label{phdev}
f=b'_0+b'_1\phi+\cdots+b'_r\phi^r+ \cdots.
\end{equation}

Take $u'_i=v_p(b'_i)$, for all $i\ge0$, and let $N'$ be the
principal polygon of the set of points $(i,u'_i)$. To any abscissa $0\le i\le \ell(N')$ we attach a residual coefficient as before:
 $$\as{1.2}
c'_i=\left\{\begin{array}{ll}
0,&\mbox{ if $(i,u'_i)$ lies above }N', \mbox{ or }u'_i=\infty,\\\rd_{p,\phi}\left(b'_i/p^{u'_i}\right),&\mbox{ if $(i,u'_i)$ lies on }N'.
\end{array}
\right.
$$ 
For the points $(i,u'_i)$ lying on $N'$ we can now have $c'_i=0$, because $b'_i$ could be divisible by $\phi$.

Finally, for any side $S'$ of $N'$ of negative slope $-\lambda=-h/e$ we can define the residual polynomial 
$$
R'_{\phi,\lambda}(f):=c'_{s'}+c'_{s'+e}\,y+\cdots+c'_{s'+(d'-1)e}\,y^{d'-1}+c'_{s'+d'e}\,y^{d'}\in\fph[y],
$$where  $d'=d(S')$ and $s'$ is the abscissa of the left endpoint of $S'$.

\begin{definition}\label{adm}
We say that the $\phi$-expansion (\ref{phdev}) is \emph{admissible} if $c'_s\ne0$
for each abs\-cissa $s$ of a vertex of $N'$.
\end{definition}

Admissible expansions yield the same principal polygon and the same residual polynomials. This was proved in \cite[Lem. 1.12]{GMN}.

\begin{lemma}\label{admissible}
If a $\phi$-expansion is admissible, then
$N'=\npp(f)$ and $c'_i=c_i$ for all abscissas $0\le i\le \ell(N')$. In particular, for any negative slope $-\lambda$ we have $R'_{\phi,\lambda}(f)=R_{\phi,\lambda}(f)$.
\end{lemma}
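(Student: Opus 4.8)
The statement I must prove (Lemma \ref{admissible}) is a comparison result between the canonical $\phi$-expansion of $f$ and an arbitrary admissible one. The plan is to pass the comparison through the $p$-adic valuation $v_p$ on $\Q_p(x)$, using the fact that both $\phi$-expansions represent the same polynomial $f$. The key algebraic identity is that the difference of the two expansions, read coefficient by coefficient, lies in a controlled ideal: since $f=\sum b_s\phi^s=\sum b'_i\phi^i$ and $\deg b_s<\deg\phi$, the passage from one expansion to the other amounts to repeatedly replacing a coefficient $b'_i$ by its canonical remainder modulo $\phi$ and carrying the quotient into $b'_{i+1}$ (and higher). Concretely, one writes $b'_i=b_i+\phi\,q_i$ with $\deg b_i<\deg\phi$, and then the relation $\sum b'_i\phi^i=\sum b_i\phi^i$ forces a cascade $b'_i - b_i = \phi(q_i)$ where the $q_i$'s are determined recursively by the requirement that everything telescopes.

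\medskip

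\textbf{Key steps.} First I would set up this telescoping precisely: starting from $f=\sum_{i\ge0} b'_i\phi^i$, divide $b'_0$ by $\phi$ to get $b'_0=b_0+\phi c^{(1)}_0$ with $\deg b_0<\deg\phi$, absorb $c^{(1)}_0$ into the $\phi^1$-coefficient, repeat. After finitely many steps this produces the canonical expansion, and at each abscissa $i$ one obtains a formula of the shape $b_i = b'_i + (\text{terms coming from lower-index quotients})\cdot\phi + \cdots$. The second, and central, step is a valuation estimate: using that $v_p(b'_j)\ge$ (the value on $N'$ at abscissa $j$) for all $j$, I would show that each correction term contributing to $b_i$ has $v_p$ at least the value of $N'$ at $i$, so that $v_p(b_i)\ge v_p(b'_i)$, hence the point $(i,u_i)$ lies on or above $N'$; this gives $\npp(f)\supseteq$ (is on or above) the polygon $N'$ in the sense that $N'$ lies on or below $\npp(f)$. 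Combined with the trivial reverse inequality (the canonical expansion is one particular expansion, so $\npp(f)$ lies on or above... no — rather: every point of the canonical expansion lies on or above $N'$ because $N'$ is built from valuations of a different expansion), the admissibility hypothesis $c'_s\ne0$ at the vertices of $N'$ is exactly what pins the two polygons together: at a vertex abscissa $s$ of $N'$ we get $u_s=u'_s$ (the correction has strictly larger valuation, as $c'_s\ne0$ means no cancellation is forced), so $\npp(f)$ and $N'$ share all vertices of $N'$, and since $\npp(f)$ is convex and lies above $N'$ with the same endpoints and vertices, the two polygons coincide. Finally, once $N'=\npp(f)$, a point-by-point check on the sides shows the correction terms at each abscissa $i$ on the polygon have valuation strictly larger than $u_i$, whence $\rd_{p,\phi}(b_i/p^{u_i})=\rd_{p,\phi}(b'_i/p^{u'_i})$, i.e. $c_i=c'_i$; the equality of residual polynomials is then immediate from the definition.

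\medskip

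\textbf{Main obstacle.} The technical heart is the bookkeeping of the telescoping cascade and the accompanying valuation inequalities: showing that the quotients $q_i$ propagate with non-decreasing ``polygon value'' so that no correction term can pull a point of the canonical expansion strictly below $N'$, and that at vertices of $N'$ the admissibility hypothesis rules out the borderline cancellation. This is precisely the content of \cite[Lem. 1.12]{GMN}, which the excerpt already cites as the source, so I would either invoke it directly or reproduce its argument in the present $N$-adic packaging; there is genuinely no new idea needed beyond that lemma, only the verification that nothing in the setup of section \ref{subsec3rd} (the $p$-adic, not $N$-adic, situation here, since $p\mid N$ is fixed) is affected. In short: \emph{the lemma is \cite[Lem. 1.12]{GMN} verbatim}, and the proof is the telescoping-plus-valuation argument sketched above.
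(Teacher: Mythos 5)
Your identification is exactly right: the paper gives no proof of this lemma at all, it simply cites \cite[Lem.~1.12]{GMN} in the sentence preceding the statement, so invoking that result is precisely what the authors do. Your telescoping sketch (carries produced by division by the monic $\phi$ have $p$-adic value at least the value of $N'$ at the previous abscissa, so canonical points lie on or above $N'$, while admissibility at the vertices of $N'$ rules out cancellation and pins the two polygons and their residual coefficients together) is a sound reconstruction of that cited proof, so there is no gap.
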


\begin{theorem}\label{equalpolygons}
Suppose that the construction of $N_g^-(f)$ does not fail, and for each vertex $(s,u_s)$ of $N_g^-(f)$ the $\gcd_0$ routine does not fail and outputs $\gcd_0\left(\overline{a_s/N^{u_s}},\overline{g}\right)=1$. 

Take a prime divisor $p$ of $N$ and a $p$-adic irreducible factor $\phi$ of $g$. Let $\m=(p,\phi)$ be the corresponding maximal ideal of $A_1$. Then,

\begin{enumerate}
\item The canonical $g$-expansion of $f$ is an admissible $\phi$-expansion. 
\item $N_\phi^-(f)=E_\rho(N_g^-(f))$, where  $\rho=v_p(N)$ and $E_\rho\colon \R^2\lra \R^2$ is the affine plane transformation given by $E_\rho(x,y)=(x,\rho y)$.
\item For any slope $-\lambda=-h/e$ of $N_g^-(f)$, let $P=\rd_\m\left(R_{g,\lambda}(f)\right)\in\F_\m[y]$. There exist $\sigma,\tau\in \F_\m^*$ such that $R_{\phi,\rho\lambda}(f)(y)=\tau P(\sigma y^{\gcd(\rho,e)})$.
\end{enumerate}
\end{theorem}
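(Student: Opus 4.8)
The plan is to transfer everything to the known behaviour of ordinary $p$-adic Newton polygons by a single substitution. By Hensel's lemma (as in the discussion preceding Lemma~\ref{length}), write $g=\phi h$ in $\Z_p[x]$, where $h$ is the product of the $p$-adic irreducible factors of $g$ other than $\phi$; then $h$ is monic with $v_p(h)=0$ and $\rd_p(h)$ prime to $\rd_p(\phi)$. Plugging $g=\phi h$ into the canonical $g$-expansion~(\ref{phiadicN}) gives a $\phi$-expansion $f=\sum_{i\ge0}b_i'\phi^i$ of type~(\ref{phdev}) with $b_i'=a_ih^i$ (and $b_i'=0$ for $i>r$). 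Since $v_p(h)=0$ and, by Lemma~\ref{vNgood} applied to the coefficients of each $a_i$ (using that the construction of $N_g^-(f)$ does not factorize $N$), $v_p(a_i)=\rho\,v_N(a_i)=\rho u_i$, we get $u_i':=v_p(b_i')=\rho u_i$ for all $i$. Hence the point cloud $\{(i,u_i')\}$ is the image of $\{(i,u_i)\}$ under $E_\rho$, and as $\rho>0$ preserves lower convex hulls and the sign of slopes, the principal polygon $N'$ of this $\phi$-expansion is exactly $E_\rho(N_g^-(f))$, with slopes $-\rho\lambda$ for $-\lambda$ a slope of $N_g^-(f)$.

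I would then show this $\phi$-expansion is admissible in the sense of Definition~\ref{adm}, which is item~(1). If $s$ is the abscissa of a vertex of $N'$, hence of $N_g^-(f)$, and $N=p^\rho m$ with $p\nmid m$, then inside $\Z_p[x]$ one has $a_s/p^{\rho u_s}=m^{u_s}(a_s/N^{u_s})$, so
\[
c_s'=\rd_{p,\phi}\!\big(b_s'/p^{u_s'}\big)=\rd_p(m)^{u_s}\,\rd_{p,\phi}\!\big(a_s/N^{u_s}\big)\,\rd_{p,\phi}(h)^s .
\]
Here $\rd_p(m)$ and $\rd_{p,\phi}(h)$ are units of $\F_{p,\phi}$, and $\rd_{p,\phi}(a_s/N^{u_s})\neq0$ because the hypothesis $\gcd_0(\overline{a_s/N^{u_s}},\overline g)=1$ forces, via Lemma~\ref{gcd}, $\rd_p(a_s/N^{u_s})$ to be nonzero and prime to $\rd_p(g)$, hence to $\rd_p(\phi)$. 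Thus $c_s'\neq0$, admissibility holds, and Lemma~\ref{admissible} yields $N_\phi^-(f)=N'=E_\rho(N_g^-(f))$, which is item~(2), together with $R'_{\phi,\mu}(f)=R_{\phi,\mu}(f)$ for every negative slope $-\mu$.

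For item~(3), fix a side $S$ of $N_g^-(f)$ of slope $-\lambda=-h/e$, with left abscissa $s_0$ and $d=d(S)$, so $R_{g,\lambda}(f)=\sum_{k=0}^d c_{s_0+ke}\,y^k$. Set $\delta=\gcd(\rho,e)$ and $e'=e/\delta$; then $-\rho\lambda=-(\rho h/\delta)/e'$ in lowest terms, and the corresponding side $S'$ of $N'$ has left abscissa $s_0$ and $d(S')=\delta d$. Running over the lattice abscissas $i=s_0+ke'$ of $S'$ with $0\le k\le\delta d$: if $\delta\nmid k$ then $(i,u_i')$ lies strictly above $S'$, since $u_i\in\Z$ while the height of $S$ over $i$ equals $u_{s_0}-hk/\delta\notin\Z$, so $c_i'=0$; if $k=\delta k''$ then $i=s_0+k''e$ is a lattice point of $S$ and the identity above (together with $\rd_\m\circ\rd_{N,g}=\rd_{p,\phi}$, immediate from the definition $\m=\rd_{N,g}(p,\Phi)$) gives $c_i'=\tau\,(\gamma^{-h}\eta^e)^{k''}\rd_\m(c_{s_0+k''e})$ with $\gamma=\rd_p(m)$, $\eta=\rd_\m(h)$, $\tau=\gamma^{u_{s_0}}\eta^{s_0}\in\F_\m^*$. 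Summing over $k$, and writing $P=\rd_\m(R_{g,\lambda}(f))=\sum_{k''=0}^d\rd_\m(c_{s_0+k''e})\,y^{k''}$, we obtain
\[
R'_{\phi,\rho\lambda}(f)(y)=\tau\sum_{k''=0}^d\rd_\m(c_{s_0+k''e})\,(\sigma y^{\delta})^{k''}=\tau\,P\!\big(\sigma y^{\gcd(\rho,e)}\big),\qquad\sigma=\gamma^{-h}\eta^e\in\F_\m^* .
\]
Combining with $R'_{\phi,\rho\lambda}(f)=R_{\phi,\rho\lambda}(f)$ from item~(1) and Lemma~\ref{admissible} proves item~(3). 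Note the hypothesis makes the endpoints $s_0$ and $s_0+de$ of $S$ vertices of $N_g^-(f)$, so $\rd_\m(c_{s_0})$ and $\rd_\m(c_{s_0+de})$ are nonzero and the degrees $\delta d$ on both sides of the last display agree.

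The substitution $g=\phi h$ plus the appeal to Lemma~\ref{admissible} make items~(1) and~(2) essentially formal; the one genuinely delicate point, on which I would spend most of the write-up, is the bookkeeping in item~(3): tracking how the denominator of the slope drops from $e$ to $e/\gcd(\rho,e)$ under $E_\rho$, pinning down exactly which lattice points of the stretched side $S'$ carry a nonzero residual coefficient, and keeping the powers of the two units $\gamma,\eta$ straight so that the answer collapses to the substitution $y\mapsto\sigma y^{\gcd(\rho,e)}$ followed by scaling by $\tau$.
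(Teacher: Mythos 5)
Your proof is correct and follows essentially the same route as the paper: rewrite the canonical $g$-expansion as a $\phi$-expansion via $g=\phi\cdot(g/\phi)$, use the vertex hypothesis together with Lemma \ref{gcd} to get admissibility, invoke Lemma \ref{admissible} for items (1)--(2), and track the units $\rd_{p,\phi}(N/p^{\rho})$ and $\rd_{p,\phi}(g/\phi)$ to produce the same $\sigma,\tau$ as in the paper's proof of item (3). The only remarks are cosmetic: you use the letter $h$ both for the cofactor $g/\phi$ and for the numerator of the slope $\lambda=h/e$, which should be disambiguated, and your explicit check that the residual coefficients at abscissas $s_0+ke'$ with $\gcd(\rho,e)\nmid k$ vanish is a detail the paper leaves implicit.
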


\begin{proof}
Rewrite the canonical $g$-expansion of $f$ as a $\phi$-expansion:
$$
f=a_0+a_1g+\cdots+a_\ell g^\ell+\cdots=b'_0+b'_1\phi+\cdots + b'_\ell\phi^\ell+\cdots,
$$
where $b'_i=a_i\Phi^i$, for $\Phi:=g/\phi$. Let $N'$ be the principal Newton polygon determined by the cloud of points $(i,u'_i)$, where $u'_i=v_p(b'_i)$ for all $i$.

Since $\Phi$ is a monic polynomial we have $v_p(\Phi)=0$, so that $u'_i=v_p(a_i)$ for all $i$. Hence, $N'$ coincides with the principal Newton polygon associated with the canonical $g$-expansion of $f$, with respect to the $p$-adic valuation $v_p$. Since $v_p=\rho v_N$, this shows that
$N'=E_\rho(N_g^-(f))$.

In particular, the slopes of $N'$ are $-\rho\lambda$, for $-\lambda$ running on the slopes of $N_g^-(f)$. Also, the vertices of $N'$ and $N_g^-(f)$ have the same abscissas.

Let us now prove item (1). For any abscissa $s$ of a vertex of $N'$ we have $c'_s=\rd_{p,\phi}(b'_s/p^{u'_s})=\rd_{p,\phi}(a_s/p^{u'_s})\rd_{p,\phi}(\Phi^s)$. Clearly, $\rd_{p,\phi}(\Phi)\ne0$ because $\Phi$ and $\phi$ are coprime modulo $p$. On the other hand, $\gcd_0\left(\overline{a_s/N^{u_s}},\overline{g}\right)=1$ by our assumptions; thus, Lemma \ref{gcd} shows that the reduction modulo $p$ of $a_s/N^{u_s}$ and $g$ are coprime. Since $N^{u_s}=p^{u'_s}M$, with $p\nmid M$, we deduce that $\rd_{p,\phi}(a_s/p^{u'_s})\ne0$ as well. Therefore $c'_s\ne0$, which is the condition of admissibility. 

By Lemma \ref{admissible}, we have $N'=N_\phi^-(f)$ and $R'_{\phi,\rho\lambda}(f)=R_{\phi,\rho\lambda}(f)$ for all slopes $\rho\lambda$ of $N_\phi^-(f)$. This proves item (2).

Also, in order to prove item (3) we may compare $P:=\rd_\m\left(R_{g,\lambda}(f)\right)$ with $R'_{\phi,\rho\lambda}(f)$ instead of  $R_{\phi,\rho\lambda}(f)$.

The compatibility of the different reduction maps is a consequence of the commutativity of the following diagram:

\begin{center}
\setlength{\unitlength}{4mm}
\begin{picture}(10,5.6)
\put(0,5){$\Z[x]$}\put(2.5,5.4){\vector(1,0){3.4}}\put(7.5,5){$\Z_p[x]$}
\put(-2.2,2.6){$\rd_{N,g}$}\put(9,2.6){$\rd_{p,\phi}$}
\put(.6,0){$A_1$}\put(2.5,.4){\vector(1,0){3.5}}
\put(3.2,1){$\rd_\m$}\put(6.8,0){$\F_\m=\F_{p,\phi}$}
\put(8.5,4.2){\vector(0,-1){3}}\put(1,4.2){\vector(0,-1){3}}
\end{picture}
\end{center}

Let $s$ be the initial (left) abscissa of the side of slope $-\lambda$ of $N_g^-(f)$, and consider
$$
z=\rd_{p,\phi}(N/p^\rho),\quad w=\rd_{p,\phi}(\Phi),\quad
\tau=w^sz^{u_s},\quad \sigma=w^ez^{-h}.
$$

For $0\le j\le d(S)$, let $c_{s_j}\in A_1$ be the coefficient of the monomial of degree $j$ of $R_{g,\lambda}(f)$, where $s_j=s+je$. If $c_{s_j}\ne0$, then by definition, $c_{s_j}=\rd_{N,g}\left(a_{s_j}/N^{u_{s_j}}\right)$. By using $u_{s_j}=u_s-jh$, a straightforward computation shows  that
$$
\tau \sigma^j\rd_\m(c_{s_j})=\rd_{p,\phi}\left(b'_{s_j}/p^{u'_{s_j}}\right)=c'_{s_j}.
$$
Finally, in the Newton polygon $N'$, the slope $-\rho\lambda=-\rho h/e$ has least positive denominator $e'=e/k$, for $k=\gcd(e,\rho)$. Hence,  $c'_{s_j}=c'_{s+je}=c'_{s+jke'}$ is actually the coefficient of the monomial of degree $kj$
of $R'_{\phi,\rho\lambda}(f)$. This ends the proof of the third item. 
\end{proof}

Conclusions (2) and (3) of Theorem \ref{equalpolygons} are very strong. For sure, the polygons $N_{\phi_j}^-(f)$ have the same length, but could have a very different configuration of sides and slopes. It seems unlikely that all these Newton polygons coincide and coincide as well (apart from the eventual change of the value of $\rho$) with all polygons attached to all other prime divisors of $N$. This suggests that the condition $\gcd_0(\overline{a_s/N^{u_s}},\overline{g})=1$ for all vertices of $N_g^-(f)$ should fail relatively often, providing a factorization of $N$ or $\overline{g}$ with a reasonably high probability. 

\begin{corollary}\label{Diss2N}
Under the same assumptions, let us focus our attention on the set $\pp(p)$ of the prime ideals of $\Z_K$ dividing $p$:
$$
\pp_{p,g}:=\pp_g\cap\pp(p)=\{\p\in \pp(p)\mid v_\p(g(\t))>0\}.
$$
Then, we get a second dissection:
$$
\pp_{p,g}=\coprod\nolimits_{\lambda}\pp_{p,g,\lambda},\quad \pp_{p,g,\lambda}=\left\{\p\in\pp_{p,g}\mid v_\p(g(\t))=e(\p/p)\rho \lambda\right\},
$$
for $-\lambda$ running on the slopes of $N_g^-(f)$.
\end{corollary}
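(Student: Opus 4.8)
The plan is to reduce to the classical first and second dissections of Section~\ref{secOre}, carried out over $\Z_p$, and then to transport the resulting partition back to $g$ by means of Theorem~\ref{equalpolygons}. So I would fix a prime divisor $p$ of $N$, write $\rho=v_p(N)$, and use the factorizations $\rd_p(g)=\varphi_1\cdots\varphi_t$ and $g=\phi_1\cdots\phi_t$ in $\Z_p[x]$ recalled just before Lemma~\ref{length}.

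First I would identify $\pp_{p,g}$ with a disjoint union over the $\phi_j$. For $\p\in\pp(p)$ one has $v_\p(g(\t))=e(\p/p)\,v_p(g(\iota_\p(\t)))=e(\p/p)\sum_{j=1}^t v_p(\phi_j(\iota_\p(\t)))$, all summands being $\ge 0$; and since $\p$ is attached by Hensel to exactly one monic irreducible factor of $\rd_p(f)$, while the $\varphi_j$ are pairwise distinct, at most one summand can be positive. Hence $\p\in\pp_{p,g}$ precisely when $\p\in\pp_{\phi_j}$ for a unique $j$, where $\pp_{\phi_j}$ denotes the corresponding piece of the first dissection of $f$ at $p$; that is, $\pp_{p,g}=\coprod_{j=1}^t\pp_{\phi_j}$.

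Next I would feed each $\phi_j$ to the second dissection (Theorem~\ref{Diss2}): $\pp_{\phi_j}=\coprod_\mu\pp_{\phi_j,\mu}$, with $-\mu$ ranging over the slopes of $N_{\phi_j}^-(f)$ and $\pp_{\phi_j,\mu}=\{\p\mid v_p(\phi_j(\iota_\p(\t)))=\mu\}$. This is the step where the hypotheses of Theorem~\ref{equalpolygons} are used: that theorem gives $N_{\phi_j}^-(f)=E_\rho(N_g^-(f))$ for \emph{every} $j$, and since $E_\rho$ maps slopes injectively, the slope sets of all the $N_{\phi_j}^-(f)$ coincide and equal $\{-\rho\lambda:\ -\lambda\ \text{a slope of}\ N_g^-(f)\}$. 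Thus each of these partitions reads $\pp_{\phi_j}=\coprod_\lambda\pp_{\phi_j,\rho\lambda}$, indexed by the single set of slopes of $N_g^-(f)$.

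To conclude I would do the valuation bookkeeping. For $\p\in\pp_{\phi_j,\rho\lambda}$, Lemma~\ref{length}(2) gives $v_\p(g(\t))=e(\p/p)\rho\lambda$ (equivalently, this drops out of the displayed identity above, only the $j$-th summand surviving). Setting $\pp_{p,g,\lambda}:=\coprod_{j=1}^t\pp_{\phi_j,\rho\lambda}$, the same identity shows it equals $\{\p\in\pp_{p,g}\mid v_\p(g(\t))=e(\p/p)\rho\lambda\}$ and that each $\p\in\pp_{p,g}$ lies in exactly one $\pp_{p,g,\lambda}$, its unique positive value $v_p(g(\iota_\p(\t)))$ being one well-determined slope of $N_{\phi_j}^-(f)=E_\rho(N_g^-(f))$; together with $\pp_{p,g}=\coprod_j\pp_{\phi_j}$ this gives $\pp_{p,g}=\coprod_\lambda\pp_{p,g,\lambda}$. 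I do not expect a real obstacle here: once Theorem~\ref{equalpolygons} is available the argument is pure organization, the only points needing care being the passage between $v_\p$ on $K$ and $v_p$ on $\qb_p$ through the ramification index $e(\p/p)$, and the observation that Theorem~\ref{equalpolygons}(2) forces \emph{all} the $p$-adic polygons $N_{\phi_j}^-(f)$ to be one and the same dilation $E_\rho(N_g^-(f))$ — precisely what legitimizes indexing the dissection of $\pp_{p,g}$ by the slopes of the single polygon $N_g^-(f)$.
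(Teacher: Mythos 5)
Your proposal is correct and follows essentially the same route as the paper: decompose $\pp_{p,g}=\coprod_{j=1}^t\pp_{\phi_j}$ via the $p$-adic factorization $g=\phi_1\cdots\phi_t$, invoke Theorem~\ref{equalpolygons}(2) to see that all polygons $N_{\phi_j}^-(f)$ equal $E_\rho(N_g^-(f))$ so that the classical second dissection of each $\pp_{\phi_j}$ is indexed by the slopes of the single polygon $N_g^-(f)$, and finish with Lemma~\ref{length}(2) to translate $v_p(\phi_j(\iota_\p(\t)))=\rho\lambda$ into $v_\p(g(\t))=e(\p/p)\rho\lambda$. The extra detail you supply (that at most one summand $v_p(\phi_j(\iota_\p(\t)))$ can be positive) is a correct elaboration of what the paper calls ``clear.''
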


\begin{proof}
If $g=\phi_1\cdots\phi_t$ is the factorization of $g$ in $\Z_p[x]$, we have clearly
$\pp_{p,g}=\coprod_{i=1}^t\pp_{\phi_i}$. Now, since all Newton polygons $N_{\phi_i}^-(f)$ concide with $E_\rho(N_g^-(f))$, the classical second dissection for the prime $p$ is:
$$
\pp_{\phi_i}=\coprod\nolimits_{\lambda}\pp_{\phi_i,\rho\lambda},
$$
for $-\lambda$ running on the slopes of $N_g^-(f)$. Finally, item (2) of Lemma \ref{length}
shows that $\pp_{p,g,\lambda}=\bigcup_{i=1}^t\pp_{\phi_i,\rho\lambda}$.
\end{proof}

\begin{corollary}\label{sqf}
Under the same assumptions, $R_{g,\lambda}(f)\in A_1[y]$ is squarefree if and only if $R_{\phi,v_p(N)\lambda}(f)\in\F_\m[y]$ is squarefree, for all maximal ideals $\m=(p,\phi)$ of $A_1$. 
\end{corollary}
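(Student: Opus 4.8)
The plan is to reduce the statement to the third item of Theorem~\ref{equalpolygons} together with one elementary fact about squarefreeness over a field of positive characteristic. By definition, $R_{g,\lambda}(f)\in A_1[y]$ is squarefree if and only if $\rd_\m(R_{g,\lambda}(f))$ is squarefree in $\F_\m[y]$ for every maximal ideal $\m$ of $A_1$, and every such $\m$ is of the form $(p,\phi)$ with $p$ a prime divisor of $N$ and $\phi$ a $p$-adic irreducible factor of $g$. So I would fix one such $\m=(p,\phi)$, write $\rho=v_p(N)$, $k=\gcd(\rho,e)$ and $P=\rd_\m(R_{g,\lambda}(f))\in\F_\m[y]$, and show that $P$ is squarefree if and only if $R_{\phi,\rho\lambda}(f)$ is. By the third item of Theorem~\ref{equalpolygons} there are units $\sigma,\tau\in\F_\m^*$ with $R_{\phi,\rho\lambda}(f)(y)=\tau\,P(\sigma y^{k})$; since multiplication by $\tau$ does not affect squarefreeness, this amounts to showing that $P$ is squarefree if and only if $P(\sigma y^{k})$ is.

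To prove this last equivalence I would use two facts. First, $P(0)\ne 0$: by item (2) of Theorem~\ref{equalpolygons} the side of slope $-\rho\lambda$ is a genuine side of the classical polygon $N_\phi^-(f)$, so $R_{\phi,\rho\lambda}(f)$ is a classical residual polynomial and hence is not divisible by $y$; evaluating the identity $R_{\phi,\rho\lambda}(f)(y)=\tau P(\sigma y^{k})$ at $y=0$ then gives $\tau P(0)\ne 0$. (Alternatively, $\gcd_0(\overline{a_s/N^{u_s}},\overline g)=1$ at the left endpoint $s$ of the side, together with Lemma~\ref{gcd}, forces $\rd_\m(c_s)\ne 0$, which is the constant term of $P$.) Second, $p\nmid k$: the integer $e$ is the least positive denominator of a slope of $N_g^-(f)$, hence $e\le\ell(N_g^-(f))=\ell\le n$ by Lemma~\ref{ell=ord}, and since every prime divisor of $N$ exceeds $n$ we get $p\nmid e$, whence $p\nmid k$. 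Granting these, pass to a splitting field of $P$ and factor $P(y)=c\prod_j(y-a_j)$ with all $a_j\ne 0$, so that $P(\sigma y^{k})=c\,\sigma^{\deg P}\prod_j(y^{k}-a_j/\sigma)$. Because $p\nmid k$, each factor $y^{k}-a_j/\sigma$ is separable — its derivative $k\,y^{k-1}$ is prime to it, as $a_j/\sigma\ne 0$ — and two distinct such factors share a root precisely when $a_j=a_{j'}$. Therefore $P(\sigma y^{k})$ has a repeated root if and only if $P$ does, which is exactly the equivalence we want, and the corollary follows.

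I do not expect a genuine obstacle here: once Theorem~\ref{equalpolygons} is at hand, the corollary is bookkeeping plus the separability fact above. The one place where the running hypotheses are truly used is the estimate $e\le n<p$, which rules out the wild situation $p\mid k$ in which the substitution $y\mapsto\sigma y^{k}$ could merge roots and destroy squarefreeness; this is precisely why the condition that every prime divisor of $N$ exceeds $n$ is imposed throughout Section~\ref{secNewton}.
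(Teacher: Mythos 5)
Your proof is correct and follows essentially the same route as the paper: reduce to item (3) of Theorem~\ref{equalpolygons} and check that the transformation $P\mapsto \tau P(\sigma y^{k})$ preserves squarefreeness. The one place you go beyond the paper is worth noting: the paper simply asserts that for $P(0)\ne 0$ squarefreeness is preserved under $y\mapsto y^{k}$, which is false in characteristic $p$ when $p\mid k$ (e.g.\ $y-1\mapsto y^{p}-1=(y-1)^{p}$), so your explicit verification that $p\nmid k=\gcd(\rho,e)$ --- via $e\le \ell(N_g^-(f))\le n<p$, using the standing hypothesis that all prime divisors of $N$ exceed $n$ --- is not optional pedantry but a step the paper's proof tacitly relies on. Both of your arguments for $P(0)\ne 0$ (via the nonvanishing of the classical residual polynomial at $0$, or via $\gcd_0\bigl(\overline{a_s/N^{u_s}},\overline g\bigr)=1$ at the left vertex of the side) are valid, and the splitting-field computation correctly shows that, under $P(0)\ne0$ and $p\nmid k$, the substitution neither creates nor destroys repeated roots.
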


\begin{proof}
By definition, $R_{g,\lambda}(f)$ is squarefree if and only if $\rd_\m\left(R_{g,\lambda}(f)\right)$ is squarefree for all maximal ideals $\m$ of $A_1$. Now, a polynomial $P\in\F_\m[y]$ is squarefree if and only if it has a non-zero discriminant. This property is preserved if we transform $P$ into $\tau P(\sigma y)$, for $\sigma,\tau\in\F_\m^*$. Also, if $P(0)\ne0$, then squarefreeness is preserved if we transform $P$ into $P(y^k)$ for $k>0$. By construction, the residual polynomials have a non-zero constant term; hence, Theorem \ref{equalpolygons} shows that $\rd_\m\left(R_{g,\lambda}(f)\right)$ is squarefree if and only if $R_{\phi,v_p(N)\lambda}(f)$ is squarefree. 
\end{proof}

\subsection{The condition of $N$-regularity}\label{subsecNReg}
Suppose that the routine SFD$_0$ does not fail and outputs a squarefree decomposition of $\overline{f}$. Let us choose representatives $g_1,\dots, g_m\in\Z[x]$ of the squarefree factors of $\overline{f}$ in $A[x]$.

We say that $f$ is \emph{$N$-regular} with respect to these choices
if the following conditions are satisfied:

\begin{enumerate}
\item The construction of none of the Newton polygons $N_{g_i}^-(f)$ fails. 
\item For all $i$, and for each abscissa $s$ of a vertex of $N_{g_i}^-(f)$, the $\gcd_0$ routine does not fail and outputs $\gcd_0\left(\overline{a_{s,i}/N^{u_{s,i}}},\overline{g_i}\right)=1$, where $a_{s,i}$ is the $s$-th coefficient of the $g_i$-expansion of $f$ and $u_{s,i}=v_N(a_{i,s})$. 
\item For all $i$, and for each slope $-\lambda$ of $N_{g_i}^-(f)$, the routine SFD$_1$ does not fail and certificates that $R_{g_i,\lambda}(f)$ is squarefree. 
\end{enumerate}

The next result is an immediate consequence of  Theorem \ref{equalpolygons} and Corollary \ref{sqf}.

\begin{corollary}\label{regreg}
Supose $f$ is $N$-regular with respect to the choice of the polynomials $g_1,\dots,g_m$. Then, for all primes $p$ dividing $N$, our defining polynomial $f$ is $p$-regular with respect to the choices of all $p$-adic irreducible factors in $\Z_p[x]$ of all $g_i$, as representatives of the pairwise different irreducible factors of $f$ modulo $p$.
\end{corollary}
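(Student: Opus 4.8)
The plan is to check, straight from Definition \ref{pregular}, the two assertions packed into ``$f$ is $p$-regular with respect to the $p$-adic irreducible factors of the $g_i$'': first, that those $\Z_p[x]$-factors are indeed an admissible choice of representatives of the pairwise different irreducible factors of $f$ modulo $p$; second, that $f$ is $\phi$-regular for each such $\phi$.

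For the first point I would invoke Lemma \ref{sfd}. Since we are assuming SFD$_0$ did not fail and $f\equiv g_1^{\ell_1}\cdots g_m^{\ell_m}\md N$ is the canonical squarefree decomposition of $\overline f$, reduction modulo any prime $p\mid N$ gives the canonical squarefree decomposition $\rd_p(f)=\prod_i\rd_p(g_i)^{\ell_i}$ in $\F_p[x]$, with the $\rd_p(g_i)$ squarefree and pairwise coprime. Factoring each $\rd_p(g_i)$ into distinct monic irreducibles and collecting these over all $i$ therefore produces exactly the set of pairwise different irreducible factors of $\rd_p(f)$; by Hensel's lemma each $g_i$ splits in $\Z_p[x]$ as a product of monic irreducibles lifting those factors, and the union over $i$ of all these $\Z_p[x]$-factors is precisely the choice named in the statement.

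For the second point I would fix $p\mid N$ and a $p$-adic irreducible factor $\phi$ of some $g=g_i$, and let $\m=(p,\phi)$ be the associated maximal ideal of $A_1$. Conditions (1) and (2) in the definition of $N$-regularity (section \ref{subsecNReg}) are exactly the hypotheses of Theorem \ref{equalpolygons} for this $g$, so that theorem applies: $N_\phi^-(f)=E_\rho(N_g^-(f))$ with $\rho=v_p(N)$, the slopes of $N_\phi^-(f)$ are the $-\rho\lambda$ for $-\lambda$ a slope of $N_g^-(f)$, and each residual polynomial $R_{\phi,\rho\lambda}(f)$ equals $\tau\,P(\sigma y^{\gcd(\rho,e)})$ with $P=\rd_\m(R_{g,\lambda}(f))$. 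Condition (3) of $N$-regularity gives that each $R_{g,\lambda}(f)\in A_1[y]$ is squarefree, hence $\rd_\m(R_{g,\lambda}(f))$ is squarefree for every maximal ideal $\m$ of $A_1$; by Corollary \ref{sqf} this is equivalent to $R_{\phi,v_p(N)\lambda}(f)$ being squarefree at every $\m=(p,\phi)$. So every residual polynomial attached to a side of $N_\phi^-(f)$ is squarefree, i.e.\ $f$ is $\phi$-regular. Letting $\phi$ range over all $p$-adic irreducible factors of all the $g_i$ yields $p$-regularity, and letting $p$ range over the prime divisors of $N$ finishes the argument.

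I expect no real obstacle here: the substantive work is already done in Theorem \ref{equalpolygons} and Corollary \ref{sqf}, and the proof is essentially an unwinding of definitions. The only point deserving a line of care is the slope bookkeeping — matching the slopes $-\lambda$ of $N_g^-(f)$ with the slopes $-\rho\lambda$ of $N_\phi^-(f)$ and confirming that running over all sides of the former polygon exhausts all sides of the latter — together with the observation (already used in the proof of Corollary \ref{sqf}) that $P\mapsto\tau P(\sigma y^{\gcd(\rho,e)})$ preserves squarefreeness precisely because residual polynomials have nonzero constant term.
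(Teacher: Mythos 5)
Your argument is correct and follows the paper's own route: the paper dismisses this corollary as an immediate consequence of Theorem \ref{equalpolygons} and Corollary \ref{sqf}, which is exactly how you prove the $\phi$-regularity part, with the slope correspondence $-\lambda\mapsto-\rho\lambda$ coming from $N_\phi^-(f)=E_\rho(N_g^-(f))$. Your additional justification via Lemma \ref{sfd} and Hensel's lemma that the $p$-adic factors of the $g_i$ are a legitimate choice of representatives of the distinct irreducible factors of $f$ modulo $p$ is a detail the paper leaves implicit, and it is handled correctly.
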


\section{Computation of an $N$-integral basis in the regular case}\label{secMain}
We keep with the notation of the preceding sections. 
The aim  of this section is to compute an $N$-integral basis, under the assumption that the defining polynomial $f$ of our number field $K$ is $N$-regular.

\begin{definition}
Let $\zp$ be the localization of $\Z$ at the prime ideal $p\Z$.   
A \emph{$p$-integral basis} of $\Z_K$ is a family $\alpha _1,\dots,\alpha _n\in\Z_K$ such that $\alpha _1\otimes1,\dots,\alpha _n\otimes1$ is a $\zp$-basis of $\Z_K\otimes_\Z\zp$.

We say that $\alpha _1,\dots,\alpha _n\in\Z_K$ is an \emph{$N$-integral basis} of $\Z_K$ if
it is a $p$-integral basis for all prime divisors $p$ of $N$.
\end{definition}

Suppose we choose representatives $g_1,\dots, g_m\in\Z[x]$ of the squarefree decomposition of $\overline{f}$ in $A[x]$ provided by the routine SFD$_0$. Thus,
$$
f\equiv g_1^{\ell_1}\dots\;g_m^{\ell_m} \md{N}, \quad 0\le \ell_1<\cdots<\ell_m.
$$

Consider the different $g_i$-expansions of $f$:
$$
f=a_{i,0}+a_{i,1}g_i+\cdots+a_{i,r_i}g_i^{r_i},\quad 1\le i\le m.  
$$

\begin{definition}
The \emph{quotients} attached to each $g_i$-expansion are, by definition, the different quotients $q_{i,1},\dots,q_{i,r_i}$ that are obtained along the computation of the coefficients of the expansion:
$$
f=g_iq_{i,1}+a_{i,0},\quad
q_{i,1}=g_iq_{i,2}+a_{i,1},\quad
\cdots\quad
q_{i,r_i}=g_i\cdot 0+a_{i,r_i}=a_{i,r_i}.
$$
Equivalently, $q_{i,j}$ is the quotient of the division of $f$ by $g_i^j$.
\end{definition}

Suppose that the construction of none of the Newton polygons $N_{g_i}^-(f)$ fails. By Lemma \ref{length}, $\ell(N_{g_i}^-(f))=\ell_i$ for all $i$. 
For any integer abscissa $0 \le j\le \ell_i$, let $y_{i,j}\in\Q$ be the ordinate of the point of $N_{g_i}^-(f)$ of abscissa $j$. For any $1\le i\le m$ fixed, these rational numbers form a strictly decreasing sequence, and $y_{i,\ell_i}=0$ (see Figure \ref{fig2}).

\begin{figure}
\begin{center}
\setlength{\unitlength}{5.mm}
\begin{picture}(10,7)
\put(6.85,-.2){$\bullet$}\put(2.85,1.85){$\bullet$}\put(-.15,4.85){$\bullet$}
\put(-1,0){\line(1,0){11}}\put(0,-1){\line(0,1){7}}
\put(0,5){\line(1,-1){3}}\put(0,5.03){\line(1,-1){3}}
\put(3,2){\line(2,-1){4}}\put(3,2.03){\line(2,-1){4}}
\put(7,-.7){\begin{footnotesize}$\ell_i$\end{footnotesize}}
\put(.85,-.7){\begin{footnotesize}$1$\end{footnotesize}}
\put(1.85,-.7){\begin{footnotesize}$2$\end{footnotesize}}
\put(5.1,-.7){\begin{footnotesize}$\ell_i-1$\end{footnotesize}}
\put(-.4,-.7){\begin{footnotesize}$0$\end{footnotesize}}
\put(2.6,4.6){\begin{footnotesize}$N_{g_i}^-(f)$\end{footnotesize}}
\put(1,-.2){\line(0,1){4.2}}\put(2,-.2){\line(0,1){3.2}}
\put(6,-.2){\line(0,1){.7}}
\multiput(-.1,4)(.25,0){5}{\hbox to 2pt{\hrulefill }}
\multiput(-.1,3)(.25,0){9}{\hbox to 2pt{\hrulefill }}
\multiput(-.1,.55)(.25,0){25}{\hbox to 2pt{\hrulefill }}
\put(-1.3,3.9){\begin{footnotesize}$y_{i,1}$\end{footnotesize}}
\put(-1.3,2.9){\begin{footnotesize}$y_{i,2}$\end{footnotesize}}
\put(-2,.4){\begin{footnotesize}$y_{i,\ell_i-1}$\end{footnotesize}}
\end{picture}\caption{}\label{fig2}
\end{center}
\end{figure}

Note that the sum $\lfloor y_{i,1}\rfloor+\cdots+\lfloor y_{i,\ell_i}\rfloor$ coincides with the number of points of integer coordinates in the region delimited by the polygon and the axes.

\begin{theorem}\label{main}
Suppose that $f$ is $N$-regular and consider the following elements in the order $\Z[\t]$:
\begin{equation}\label{basis}
\alpha_{i,j,k}=q_{i,j}(\t)\,\t^k,\qquad 1\le i\le m,\ 1\le j\le\ell_i,\ 0\le k<\deg g_i.
\end{equation}

Then the family of all $\alpha_{i,j,k}/N^{\lfloor y_{i,j}\rfloor}$ is an $N$-integral
basis of $\Z_K$ if and only if at least one of the following two conditions is satisfied.
\begin{enumerate}
\item[(a)] $N$ is squarefree.
\item[(b)] All slopes of all Newton polygons $N_g^-(f)$ are integers.
\end{enumerate}

\end{theorem}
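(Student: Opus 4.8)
The plan is to argue one prime divisor of $N$ at a time, to reduce the question to the classical method of the quotients for $p$-regular polynomials, and then to extract an elementary comparison of two floor functions that produces exactly conditions (a) and (b). Fix a prime $p\mid N$ and set $\rho=v_p(N)$. By Corollary~\ref{regreg} the polynomial $f$ is $p$-regular, and by the admissibility of the canonical $g_i$-expansions of $f$ proved in Theorem~\ref{equalpolygons}(1), the method of the quotients \cite{np4,bases} can be applied directly to the polynomials $g_i$; by Theorem~\ref{equalpolygons}(2) the governing $p$-adic Newton polygon is $E_\rho(N_{g_i}^-(f))$, whose length is $\ell_i$ (Lemma~\ref{length}) and whose ordinate over an integer abscissa $j$ is $\rho y_{i,j}$. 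Hence the method of the quotients furnishes the $p$-integral basis
$$
\mathcal B_p=\bigl\{\,q_{i,j}(\t)\,\t^{\,k}\big/p^{\lfloor\rho y_{i,j}\rfloor}\ :\ 1\le i\le m,\ 1\le j\le\ell_i,\ 0\le k<\deg g_i\,\bigr\}.
$$
Since $\alpha_{i,j,k}/N^{\lfloor y_{i,j}\rfloor}=(N/p^{\rho})^{-\lfloor y_{i,j}\rfloor}\,q_{i,j}(\t)\,\t^{\,k}/p^{\,\rho\lfloor y_{i,j}\rfloor}$ and $N/p^{\rho}$ is a unit of $\zp$, the proposed family $\mathcal A=\{\alpha_{i,j,k}/N^{\lfloor y_{i,j}\rfloor}\}$ and the family $\{q_{i,j}(\t)\t^{\,k}/p^{\,\rho\lfloor y_{i,j}\rfloor}\}$ span the same $\zp$-submodule $M_p\subseteq\Z_K\otimes\zp$, and this family has exactly the same numerators as $\mathcal B_p$.

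Now $\rho\lfloor y_{i,j}\rfloor\le\lfloor\rho y_{i,j}\rfloor$ always, so $M_p$ is obtained from $\Z_K\otimes\zp=\langle\mathcal B_p\rangle_{\zp}$ by a diagonal base change with entries $p^{\lfloor\rho y_{i,j}\rfloor-\rho\lfloor y_{i,j}\rfloor}$; therefore $\mathcal A$ is a $p$-integral basis (i.e. $M_p=\Z_K\otimes\zp$) if and only if all of these exponents vanish, i.e. $\rho\{y_{i,j}\}<1$ for every $i,j$. Letting $p$ vary, $\mathcal A$ is an $N$-integral basis if and only if
$$
(\star)\qquad\{y_{i,j}\}<1/v_p(N)\quad\text{for every prime }p\mid N\text{ and all }i,j .
$$

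It remains to prove $(\star)\iff$ (a) or (b). If $N$ is squarefree then $v_p(N)=1$ for all $p\mid N$ and $(\star)$ is automatic. If all slopes of all $N_{g_i}^-(f)$ are integers then, walking leftwards from $(\ell_i,0)$ along sides of integral slope, every $y_{i,j}$ is an integer, so $\{y_{i,j}\}=0$ and $(\star)$ holds. Conversely, assume $(\star)$ and that (a) fails, so some $p_0\mid N$ satisfies $\rho_0:=v_{p_0}(N)\ge2$; I claim (b) holds. If not, some $N_{g_{i_0}}^-(f)$ has a side $S$ of slope $-h/e$ with $e\ge2$; its left endpoint is a vertex $(s,V)$ with $V\in\Z$ (the ordinates of the vertices of $N_{g_{i_0}}^-(f)$ are the integers $v_N(a_{i_0,s})$) and $\ell(S)=d(S)e\ge e$. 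Choosing $a^{*}\in\{1,\dots,e-1\}$ with $a^{*}h\equiv1\pmod e$, the integer abscissa $j_0:=s+a^{*}$ lies on $S$, satisfies $1\le j_0\le\ell_{i_0}$, and $y_{i_0,j_0}=V-a^{*}h/e$ has fractional part $(e-1)/e$. Since $e,\rho_0\ge2$ we get $e(\rho_0-1)\ge2(\rho_0-1)\ge\rho_0$, i.e. $(e-1)/e\ge1/\rho_0$, so $\rho_0\{y_{i_0,j_0}\}\ge1$, contradicting $(\star)$. Hence (b) holds, and the equivalence is proved.

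The main obstacle is a careful invocation of the method of the quotients for the $g_i$, which are only squarefree — not irreducible — modulo $p$: one must use the admissibility (Theorem~\ref{equalpolygons}(1)) and the polygon identification (Theorem~\ref{equalpolygons}(2)) to guarantee that the resulting $p$-integral basis can be presented with precisely the numerators $q_{i,j}(\t)\t^{k}$ occurring in $\mathcal A$. Alternatively the black box can be dispensed with: the $q_{i,j}(\t)\t^{k}$ form a $\zp$-basis of $\Z[\t]\otimes\zp$; each $\alpha_{i,j,k}/N^{\lfloor y_{i,j}\rfloor}$ is $p$-integral thanks to the estimate $v_\p(q_{i,j}(\t))\ge e(\p/p)\,\rho\,y_{i,j}$, obtained from the $g_i$-expansion of $f$ together with Lemma~\ref{length}(2) and Corollary~\ref{Diss2N}; and the sum of the denominator exponents equals $\ind_p(f)$ precisely when $(\star)$ holds, by Theorem~\ref{index}, Definition~\ref{phindex} and Theorem~\ref{equalpolygons}(2). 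In all cases, once the reduction to $(\star)$ is in place the remaining combinatorics is elementary.
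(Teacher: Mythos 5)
Your reduction of the theorem to the term-by-term identity $\rho\lfloor y_{i,j}\rfloor=\lfloor \rho y_{i,j}\rfloor$ (equivalently your condition $(\star)$) and the closing combinatorial argument — choosing $a^{*}h\equiv 1\pmod e$ to produce a fractional part $(e-1)/e\ge 1/\rho_0$ — are correct and coincide with the paper's endgame. The gap is in the middle, in justifying that the module generated by the $\alpha_{i,j,k}/p^{\lfloor\rho y_{i,j}\rfloor}$ is all of $\Z_K\otimes\zp$. Your primary route cites the method of the quotients of \cite{np4} as a black box furnishing $\mathcal B_p$, but that method is stated for the irreducible factors $\phi$ of $f$ modulo $p$ and produces numerators $q_{\phi,j}(\t)\t^k$ built from the $\phi$-expansions; here each $g_i$ is only squarefree modulo $p$, so the numerators $q_{i,j}(\t)\t^k$ coming from the $g_i$-expansions are genuinely different objects, and Theorem \ref{equalpolygons} by itself does not convert one family into the other. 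You flag this yourself as ``the main obstacle'' but do not resolve it.

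Your fallback is the paper's actual argument (compare $v_p((M'\otimes\zp:\zp[\t]))$ with $\ind_p(f)$), but it rests on two facts you assert without proof. First, that the $n$ elements $q_{i,j}(\t)\t^k$ form a $\zp$-basis of $\zp[\t]$: this is the paper's Lemma \ref{numerators}, and it is not automatic — distinct $q_{i,j}x^k$ can share the same degree, so linear independence modulo $p$ requires the counting argument with $\ord_\varphi(\rd_p(q_{i,j}))$ (Lemma \ref{ordphi}) applied to a maximal offending pair $(i_0,j_0)$. Second, that each $\alpha_{i,j,k}/N^{\lfloor y_{i,j}\rfloor}$ lies in $\Z_K$, i.e.\ $v_\p(q_{i,j}(\t))\ge e(\p/p)\rho y_{i,j}$ for every $\p\mid p$: this is Proposition \ref{denominator}, and it needs a case distinction according to whether $\p\in\pp_{p,g,\lambda_q}$ with $\lambda_q\ge\lambda_z$ (estimate the terms of $q_j=a_j+a_{j+1}g+\cdots$) or $v_\p(g(\t))$ is smaller (use instead $f=r_j+q_jg^j$), both via convexity of $N_g^-(f)$. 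You name the right ingredients in each case, but without these two lemmas the index comparison — and hence the whole equivalence — is not established.
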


This is the main theorem of the paper. The proof will follow the arguments in \cite[\S2]{np4}, where the theorem was proved for $N$ prime. 

\begin{proposition}\label{denominator}
For all $1\le i\le m$ and all $1\le j<\ell_i$, the element $q_{i,j}(\t)/N^{\lfloor y_{i,j}\rfloor}$ belongs to $\Z_K$.
\end{proposition}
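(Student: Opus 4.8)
The plan is to prove the sharper estimate $v_\p(q_{i,j}(\t))\ge y_{i,j}\,v_\p(N)$ for \emph{every} prime ideal $\p$ of $\Z_K$; since $\lfloor y_{i,j}\rfloor\le y_{i,j}$, this yields $q_{i,j}(\t)/N^{\lfloor y_{i,j}\rfloor}\in\Z_K$. For $\p\nmid N$ there is nothing to do, because $q_{i,j}(\t)\in\Z[\t]\subseteq\Z_K$ and $v_\p(N)=0$; so fix a prime $\p$ of $\Z_K$ above a prime divisor $p$ of $N$ and put $w=v_\p(N)=e(\p/p)\,v_p(N)$. Recall the $g_i$-expansion $f=a_{i,0}+a_{i,1}g_i+\cdots+a_{i,r_i}g_i^{r_i}$, write $u_{i,s}=v_N(a_{i,s})$, and let $y_{i,s}$ be the ordinate of $N_{g_i}^-(f)$ at the abscissa $s$ for $0\le s\le\ell_i$, extended by $y_{i,s}=0$ for $s>\ell_i$.

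The first step is to record two facts. \emph{(i)} Since $f$ is $N$-regular, the routine $v_N$ never fails, so Lemma \ref{vNgood} gives $v_p(c)=v_N(c)\,v_p(N)\ge u_{i,s}\,v_p(N)$ for every coefficient $c$ of $a_{i,s}$; hence $a_{i,s}/N^{u_{i,s}}\in\zp[x]$, and therefore $v_\p(a_{i,s}(\t))\ge u_{i,s}\,w\ge y_{i,s}\,w$ for all $s$ (using that $(s,u_{i,s})$ lies on or above $N_{g_i}^-(f)$ when $s\le\ell_i$, and that $u_{i,s}\ge0=y_{i,s}$ when $s>\ell_i$). \emph{(ii)} By Corollary \ref{Diss2N} there is a rational $\lambda\ge0$ with $v_\p(g_i(\t))=\lambda\,w$, where $\lambda=0$ if $\p\notin\pp_{g_i}$, and $-\lambda$ is a slope of $N_{g_i}^-(f)$ otherwise. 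Moreover the differences $\sigma_t:=y_{i,t}-y_{i,t+1}$ form a non-negative, non-increasing sequence, strictly positive for $t<\ell_i$ (convexity of $N_{g_i}^-(f)$ plus the extension by $0$); hence $y_{i,j}-y_{i,s}\le(s-j)\,\sigma_j$ for $s\ge j$, and $y_{i,s}-y_{i,j}\ge(j-s)\,\sigma_{j-1}$ for $s\le j$.

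The decisive idea is to write $q_{i,j}(\t)$ in \emph{two} ways and use whichever one is favorable. From the $g_i$-expansion, $q_{i,j}=\sum_{s\ge j}a_{i,s}\,g_i^{s-j}$, so
$$
q_{i,j}(\t)=\sum_{s\ge j}a_{i,s}(\t)\,g_i(\t)^{s-j};
$$
on the other hand, $f(\t)=0$ and $g_i(\t)\ne0$ (as $\deg g_i<\deg f$) give $g_i(\t)^{j}q_{i,j}(\t)=-\sum_{s<j}a_{i,s}(\t)g_i(\t)^{s}$, that is,
$$
q_{i,j}(\t)=-\sum_{s=0}^{j-1}a_{i,s}(\t)\,g_i(\t)^{s-j}.
$$
Put $\beta=\sigma_j$, so $\beta>0$ because $j<\ell_i$. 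If $\lambda\ge\beta$, bound $q_{i,j}(\t)$ with the first (tail) expression: by (i) and (ii) each summand has $v_\p\ge(y_{i,s}+(s-j)\lambda)\,w$, and for $s\ge j$ one has $y_{i,s}+(s-j)\lambda-y_{i,j}\ge(s-j)(\lambda-\sigma_j)\ge0$, whence $v_\p(q_{i,j}(\t))\ge y_{i,j}\,w$. If $\lambda<\beta$, bound $q_{i,j}(\t)$ with the second (head) expression: again each summand has $v_\p\ge(y_{i,s}+(s-j)\lambda)\,w$, and for $0\le s<j$ one has $y_{i,s}+(s-j)\lambda-y_{i,j}\ge(j-s)(\sigma_{j-1}-\lambda)\ge(j-s)(\beta-\lambda)>0$ since $\sigma_{j-1}\ge\sigma_j=\beta$; hence $v_\p(q_{i,j}(\t))\ge y_{i,j}\,w=y_{i,j}\,v_\p(N)$. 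This exhausts all cases.

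The genuinely delicate point is precisely this dichotomy: neither expansion of $q_{i,j}(\t)$ alone yields the required bound for every $\p$. When the ``$\p$-slope'' $\lambda$ of $g_i$ is at least the local slope $\sigma_j$ of $N_{g_i}^-(f)$ at the abscissa $j$ one must use the tail $\sum_{s\ge j}$, while for $\lambda<\sigma_j$ one must use the head $\sum_{s<j}$; matching each case with the convexity inequalities of (ii) is the crux. Everything else is routine, once one has checked that $N$-regularity makes the $v_N$-computations behind (i) legitimate and lets Corollary \ref{Diss2N} apply. The argument is the $N$-analogue of the one carried out for $N$ prime in \cite[\S2]{np4}.
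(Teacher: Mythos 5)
Your proof is correct and follows essentially the same route as the paper's: it establishes $v_\p(q_{i,j}(\t))\ge y_{i,j}\,v_\p(N)$ for every $\p\mid N$ by the same dichotomy on $v_\p(g_i(\t))$ versus the local slope of $N_{g_i}^-(f)$ at abscissa $j$, using the tail of the $g_i$-expansion in one case and the head (via $f(\t)=0$) in the other, together with convexity and Corollary \ref{Diss2N}. The only differences are cosmetic (your explicit $\sigma_t$-notation and the verification of the coefficient bound via Lemma \ref{vNgood}, which the paper leaves implicit).
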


\begin{proof}
We fix an index $1\le i\le m$, and we denote $g=g_i$, $\ell=\ell_i$, $q_j=q_{i,j}$, $y_j=y_{i,j}$. Also, let $f=a_0+a_1g+\cdots+a_rg^r$ be the $g$-expansion of $f$.

By definition, for all $1\le j\le r$ we have:  
\begin{equation}\label{quotient}
q_j=a_j+a_{j+1}g+\cdots+a_rg^{r-j},
\end{equation}
\begin{equation}\label{division}
f=r_j+q_j\,g^j, \mbox{ where }
r_j=a_0+a_1g+\cdots+a_{j-1}g^{j-1}.
\end{equation}

Take an index $1\le j<\ell$. We must show that for all prime divisors $p$ of $N$ and all prime ideals $\p$ dividing $p$, we have $v_{\p}(q_j(\t))\ge
e(\p/p)\rho y_j$, where $\rho=v_p(N)$. From now on, we consider $p$ (and $\rho$) fixed.

Let $-\lambda_1<\cdots<-\lambda_k$ be the slopes of the different sides of
$N_g^-(f)$. Recall the second dissection of Corollary \ref{Diss2N}. The set
$\pp_{p,g}$ splits into the disjoint union of the $k$ subsets
$$\pp_{p,g,\lambda_s}=\{\p\in\pp_{p,g}\mid v_{\p}(g(\t))=e(\p/p)\rho\lambda_s\}, \quad 1\le s\le k.$$
 Let $1\le z\le k$ be the greatest
index such that the projection of the side of slope $-\lambda_z$ to the horizontal axis contains the abscissa $j$.

Denote $u_s=v_N(a_s)=\rho v_p(a_s)$, for all $s$. Suppose first that
$\p\in\pp_{p,g,\lambda_q}$ for some $q\le z$. In this case, for all $j\le s$ we have:
\begin{align*}
v_{\p}(a_s(\t)g(\t)^{s-j})/(e(\p/p)\rho)\ge&\ u_s+(s-j)\lambda_q\ge u_s+(s-j)\lambda_z\\\ge&\ y_s+(s-j)\lambda_z  \ge y_j,
\end{align*}the last inequality by the convexity of the Newton polygon.
This shows that $v_{\p}(q_j(\t))\ge e(\p/p)\rho y_j$, because all summands of (\ref{quotient}) have this property.

Suppose now that either $\p\not\in\pp_{p,g}$ or $\p\in \pp_{g,\lambda_q}$ for
some $q>z$; that is, $v_{\p}(g(\t))=e(\p/p)\rho\mu$ for some
$\mu<\lambda_z$ ($\mu=0$ if $\p\not\in\pp_{p,g}$). In this case, we use the identities in (\ref{division}),
which imply, again:
\begin{align*}
v_\p(q_j(\t))/(e(\p/p)\rho)=&\ v_\p(r_j(\t))/(e(\p/p)\rho)-j\mu\\\ge&\ \mn_{0\le s<j}\{v_\p(a_s(\t)g(\t)^s)/(e(\p/p)\rho)\}-j\mu\\ 
=&\ v_\p(a_{s_0}(\t)g(\t)^{s_0})/(e(\p/p)\rho)-j\mu\ge u_{s_0}-(j-s_0)\mu\\
>&\ u_{s_0}-(j-s_0)\lambda_z\ge y_{s_0}-(j-s_0)\lambda_z\ge\ y_j,
\end{align*}the last inequality by the convexity of the Newton polygon.
\end{proof}

\begin{lemma}\label{ordphi}
Let $p$ be a prime divisor of $N$ and $\varphi\in\F_p[x]$ a monic irreducible factor of $\rd_p(f)$. Then, 
$$
\ord_\varphi(\rd_p(q_{i,j}))=\left\{\begin{array}{ll}
 \ell_i-j,&\qquad \mbox{if }\varphi\mid \rd_p(g_i),\\
\ell_q,&\qquad \mbox{if }\varphi\mid \rd_p(g_q),\ q\ne i,
\end{array}
\right.
$$
for all $1\le i\le m$, $1\le j\le \ell_i$.
\end{lemma}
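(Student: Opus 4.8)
The plan is to reduce the statement to a computation in the polynomial ring $\F_p[x]$, exploiting that $\rd_p(f)$ has a known factorization into irreducible powers and that $\rd_p(q_{i,j})$ is literally the quotient of $\rd_p(f)$ by $\rd_p(g_i)^j$. First I would observe that $\rd_p$ is a ring homomorphism, so applying it to the defining identity $f = r_{i,j} + q_{i,j}\,g_i^j$ (equation~(\ref{division})) gives $\rd_p(f) = \rd_p(r_{i,j}) + \rd_p(q_{i,j})\,\rd_p(g_i)^j$ in $\F_p[x]$, with $\deg \rd_p(r_{i,j}) < j\deg g_i$ (here I use that $g_i$ is monic, so reduction modulo $p$ does not drop its degree, and likewise $\deg a_{i,s} < \deg g_i$ is preserved). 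Hence $\rd_p(q_{i,j})$ is exactly the quotient, and $\rd_p(r_{i,j})$ the remainder, of the division of $\rd_p(f)$ by $\rd_p(g_i)^j$ in $\F_p[x]$. Equivalently, from~(\ref{quotient}), $\rd_p(q_{i,j}) = \sum_{s\ge j}\rd_p(a_{i,s})\,\rd_p(g_i)^{s-j}$.

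Next I would bring in the squarefree decomposition. By Lemma~\ref{sfd}, the reduction modulo $p$ of $f \equiv g_1^{\ell_1}\cdots g_m^{\ell_m} \pmod N$ is the canonical squarefree decomposition $\rd_p(f) = \rd_p(g_1)^{\ell_1}\cdots\rd_p(g_m)^{\ell_m}$ in $\F_p[x]$; in particular the $\rd_p(g_q)$ are squarefree and pairwise coprime, so for a monic irreducible $\varphi$ dividing $\rd_p(f)$ there is a unique index $q$ with $\varphi \mid \rd_p(g_q)$, and then $\varphi$ divides $\rd_p(g_q)$ exactly once. Now I split into the two cases. If $\varphi \mid \rd_p(g_q)$ with $q \ne i$, then $\varphi$ is coprime to $\rd_p(g_i)$, so $\ord_\varphi(\rd_p(g_i)^j) = 0$ and $\ord_\varphi(\rd_p(q_{i,j})) = \ord_\varphi(\rd_p(f)) - \ord_\varphi(\rd_p(g_i)^j) = \ell_q - 0 = \ell_q$, using the factorization of $\rd_p(f)$ and that $\ord_\varphi$ is additive on products. (When $j = \ell_i$, $q_{i,\ell_i} = a_{i,\ell_i}$, and one checks $\rd_p(a_{i,\ell_i})$ is a unit times the complementary factor, consistent with this count.) If instead $\varphi \mid \rd_p(g_i)$, then $\ord_\varphi(\rd_p(g_i)^j) = j$ and $\ord_\varphi(\rd_p(f)) = \ell_i$, so $\ord_\varphi(\rd_p(q_{i,j})) \ge \ell_i - j$ with equality provided $\varphi \nmid \rd_p(r_{i,j})$; but from $\rd_p(f) = \rd_p(r_{i,j}) + \rd_p(q_{i,j})\rd_p(g_i)^j$ and $\ord_\varphi(\rd_p(f)) = \ell_i \ge j$ (note $j \le \ell_i$), $\varphi^j \mid \rd_p(f)$, hence $\varphi^{\min(j,\,\ord_\varphi(\rd_p(r_{i,j})))}$ divides $\rd_p(q_{i,j})\rd_p(g_i)^j$ only if... — more cleanly: write $\rd_p(q_{i,j}) = \sum_{s\ge j}\rd_p(a_{i,s})\rd_p(g_i)^{s-j}$ from~(\ref{quotient}) and reduce modulo $\varphi^{\ell_i - j + 1}$; the $s = \ell_i$ term contributes $\rd_p(a_{i,\ell_i})\rd_p(g_i)^{\ell_i - j}$, and since $\rd_p(f)/\rd_p(g_i)^{\ell_i}$ is coprime to $\varphi$ while all terms with $s < \ell_i$ are divisible by higher powers of $\varphi$ relative to this leading one... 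I would instead argue directly that $\ord_\varphi(\rd_p(r_{i,j})) \ge j$: indeed $\rd_p(r_{i,j}) = \rd_p(f) - \rd_p(q_{i,j})\rd_p(g_i)^j$ and both terms on the right are divisible by $\varphi^j$ (the first since $j \le \ell_i$), so $\varphi^j \mid \rd_p(r_{i,j})$; combined with $\ord_\varphi(\rd_p(q_{i,j}))\ge \ell_i - j$ this would force $\ord_\varphi(\rd_p(f)) > \ell_i$ unless equality holds in the bound for $q_{i,j}$. This contradiction pins down $\ord_\varphi(\rd_p(q_{i,j})) = \ell_i - j$.

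The main obstacle is the second case: showing the inequality $\ord_\varphi(\rd_p(q_{i,j})) \ge \ell_i - j$ is sharp, i.e. that no extra cancellation occurs. The cleanest route is the counting argument just sketched — $\varphi^{\ell_i}$ divides $\rd_p(f) = \rd_p(r_{i,j}) + \rd_p(q_{i,j})\rd_p(g_i)^j$ exactly, $\varphi^j$ divides both summands, and if $\varphi^{\ell_i - j + 1}$ divided $\rd_p(q_{i,j})$ then $\varphi^{\ell_i + 1}$ would divide $\rd_p(q_{i,j})\rd_p(g_i)^j$, whence it would divide $\rd_p(r_{i,j})$ as well; but $\deg\rd_p(r_{i,j}) < j\deg g_i \le \ell_i \deg g_i$ while $\deg \varphi^{\ell_i - j + 1}$ could still be small, so degree alone is not enough — instead one uses that $\varphi^{\ell_i+1}\nmid \rd_p(f)$ directly to get the contradiction, since $\rd_p(f) = \rd_p(r_{i,j}) + \rd_p(q_{i,j})\rd_p(g_i)^j$ would then be divisible by $\varphi^{\ell_i+1}$. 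This closes the argument, and the first case is immediate by coprimality.
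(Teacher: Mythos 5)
Your overall strategy is the paper's: reduce everything modulo $p$, use that $\rd_p(f)=\rd_p(g_1)^{\ell_1}\cdots\rd_p(g_m)^{\ell_m}$ is the canonical squarefree decomposition (Lemma \ref{sfd}), and read off $\ord_\varphi$ of the quotient. But the execution has a genuine gap at exactly the point you flag as "the main obstacle". Both your lower bound $\ord_\varphi(\rd_p(q_{i,j}))\ge \ell_i-j$ and your concluding contradiction rest on the identity $\rd_p(f)=\rd_p(r_{i,j})+\rd_p(q_{i,j})\rd_p(g_i)^j$ together with control of $\ord_\varphi(\rd_p(r_{i,j}))$, yet the only bound you actually derive is $\varphi^j\mid\rd_p(r_{i,j})$. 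That is too weak: from $\ord_\varphi(\rd_p(r_{i,j}))\ge j$ and $\ord_\varphi(\rd_p(q_{i,j})\rd_p(g_i)^j)\ge \ell_i+1$ one only gets $\ord_\varphi(\rd_p(f))\ge j$, which is no contradiction with $\ord_\varphi(\rd_p(f))=\ell_i$; the step "whence $\varphi^{\ell_i+1}$ would divide $\rd_p(f)$" needs $\varphi^{\ell_i+1}\mid \rd_p(r_{i,j})$, which you never establish. The same unproved identity $\rd_p(f)=\rd_p(q_{i,j})\rd_p(g_i)^j$ is used silently in your first case when you subtract $\ord_\varphi$'s as if $\rd_p(f)$ were that product.

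The missing (one-line) observation is that $\rd_p(r_{i,j})=0$ for all $j\le\ell_i$. You can get it exactly as the paper does: by Lemma \ref{ell=ord} the coefficients $a_{i,s}$ of the $g_i$-expansion satisfy $\overline{a}_{i,s}=0$ in $A$ for $s<\ell_i$, hence $\rd_p(a_{i,s})=0$ since $p\mid N$, and $r_{i,j}=a_{i,0}+\cdots+a_{i,j-1}g_i^{j-1}$ dies modulo $p$. (Alternatively, within your own setup: $\rd_p(g_i)^j$ divides $\rd_p(f)$ for $j\le\ell_i$, and since $\deg\rd_p(r_{i,j})<j\deg g_i$, uniqueness of Euclidean division forces the remainder $\rd_p(r_{i,j})$ to vanish.) Once this is in place, $\rd_p(f)=\rd_p(q_{i,j})\,\rd_p(g_i)^j$ exactly, and both cases follow immediately from additivity of $\ord_\varphi$ on products, with no cancellation argument needed; this is precisely the paper's proof, phrased there as $\rd_p(q_{i,j})=\rd_p(q_{i,\ell_i})\rd_p(g_i)^{\ell_i-j}$ and $\rd_p(f)=\rd_p(q_{i,\ell_i})\rd_p(g_i)^{\ell_i}$.
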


\begin{proof}
For commodity we indicate reduction modulo $p$ simply with a bar.
Let $1\le q\le m$ be the unique index such that $\varphi\mid\overline{g}_q$, so that $\ord_\varphi(\overline{f})=\ell_q$. As we saw in section \ref{subsec2nd}, $\overline{a}_{i,\ell_i}\ne0$ and $\ell_i$ is the least index with this property. Hence, 
$\overline{f}=\overline{q}_{i,\ell_i}\overline{g}_i^{\ell_i}$.
We deduce that
$$
\ord_\varphi(\overline{q}_{i,\ell_i})=\left\{\begin{array}{ll}
 0,&\qquad \mbox{if }q=i,\\
\ell_q,&\qquad \mbox{if }q\ne i.
\end{array}
\right.
$$
Now, for all $j<\ell_i$, the identity (\ref{quotient}) shows that $\overline{q}_{i,j}=\overline{q}_{i,\ell_i}\,\overline{g}_i^{\ell_i-j}$, and this ends the proof of the lemma.
\end{proof}

\begin{lemma}\label{numerators}
Let $p$ be a prime divisor of $N$. 
The elements $\alpha_{i,j,k}\in\Z[\t]$ given in (\ref{basis}) form a $\zp$-basis of $\zp[\t]$.
\end{lemma}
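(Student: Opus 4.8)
The plan is to show that the $n$ elements $\alpha_{i,j,k}$ form a $\zp$-basis of $\zp[\t]$ by exhibiting them as the image, under a unit of $M_n(\zp)$, of the standard basis $1,\t,\dots,\t^{n-1}$ of $\zp[\t]$. First I would record that the total number of elements is correct: for each $i$ the index $j$ runs over $1\le j\le\ell_i$ and $k$ over $0\le k<\deg g_i$, so we get $\sum_i \ell_i\deg g_i=\deg f=n$ elements, using that $\sum_i\ell_i\deg g_i$ is the degree of $g_1^{\ell_1}\cdots g_m^{\ell_m}$, hence of $f$ modulo $p$ (here we use that $p>n$ guarantees $\deg\overline f=\deg f$). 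The claim is then equivalent to saying that the $n\times n$ transition matrix expressing the $\alpha_{i,j,k}$ in the power basis has determinant a $p$-adic unit.

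The key step is a change-of-basis argument performed modulo $p$: since $\zp$ is local with maximal ideal $p\zp$, a family of $n$ elements of $\zp[\t]$ is a $\zp$-basis if and only if its reduction modulo $p$ is an $\F_p$-basis of $\F_p[\t]=\F_p[x]/(\overline f)$. So it suffices to prove that the classes $\overline{q_{i,j}(\t)}\,\overline\t^{\,k}$ span $\F_p[x]/(\overline f)$ over $\F_p$. Here Lemma \ref{ordphi} is the crucial input: for a fixed irreducible $\varphi\mid\overline g_q$, it tells us that $\ord_\varphi(\overline q_{i,j})$ equals $\ell_i-j$ when $i=q$ and equals $\ell_q$ when $i\ne q$. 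Combining this over all $i$ gives, by the Chinese Remainder Theorem applied to $\F_p[x]/(\overline f)\cong\prod_\varphi \F_p[x]/(\varphi^{\ell_\varphi})$, a precise description of the $\varphi$-adic valuation of each generator. The generators with $i=q$, as $j$ ranges from $1$ to $\ell_q$, realize every $\varphi$-adic order $0,1,\dots,\ell_q-1$ in the local factor at $\varphi$, while for $i\ne q$ they vanish to order $\ell_q$ there, i.e. they are $\equiv 0$ modulo $\varphi^{\ell_q}$.

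From this one sets up a block-triangular argument ordered by $j$ (equivalently by the exponent $\ell_i-j$): I would show that, modulo the product $\prod_{i}\overline g_i^{\,\ell_i-1}$ of lower-order pieces, the $\deg g_i$ elements $\overline q_{i,\ell_i}(\t)\,\overline\t^{\,k}$ span the ``top layer'' of the $\overline g_i$-component, then peel off one layer at a time using $\overline q_{i,j}=\overline q_{i,\ell_i}\,\overline g_i^{\,\ell_i-j}$ from identity (\ref{quotient}); within each layer, $\{\overline q_{i,\ell_i}(\t)\,\overline\t^{\,k}\}_{0\le k<\deg g_i}$ generates a free rank-$\deg g_i$ module over $\F_p[x]/(\overline g_i)$ because $\overline q_{i,\ell_i}$ is a unit modulo $\overline g_i$ (its $\ord_{\varphi}$ is $0$ for every $\varphi\mid\overline g_i$ by Lemma \ref{ordphi}, so it is invertible in the semilocal ring $\F_p[x]/(\overline g_i)$). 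Counting dimensions — $\deg g_i$ per layer, $\ell_i$ layers, summed over $i$, totalling $n$ — forces these spanning sets to be bases, and assembling the layers back up via CRT shows the whole family is an $\F_p$-basis, hence a $\zp$-basis.

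The main obstacle I expect is the bookkeeping in the layer-by-layer (filtration) argument: making precise the statement that $\{\overline q_{i,j}(\t)\,\overline\t^{\,k}\}_{j,k}$ is compatible with the $\varphi$-adic filtration on each local factor and that the induced maps on graded pieces are isomorphisms. One clean way to sidestep most of it is to argue factor-by-factor after CRT: fix $\varphi$ with $\varphi\mid\overline g_q$, and check directly in $R_\varphi:=\F_p[x]/(\varphi^{\ell_\varphi})$ that the images of all $\alpha_{i,j,k}$ span $R_\varphi$ as an $\F_p$-vector space; for $i\ne q$ these images are $0$, and for $i=q$ the elements $\overline q_{q,j}(\t)\,\overline\t^{\,k}$, with $\overline q_{q,j}$ of $\varphi$-order exactly $\ell_q-j$ and $\overline q_{q,j}\equiv\overline q_{q,\ell_q}\,\overline g_q^{\,\ell_q-j}$, visibly generate the ideal filtration $R_\varphi\supset(\varphi)/(\varphi^{\ell_q})\supset\cdots$ with the right number of $\F_p$-dimensions at each step because $\overline q_{q,\ell_q}$ is a unit in $R_\varphi$ and $\overline g_q$ generates $\operatorname{rad}(R_\varphi)$. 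Then a global dimension count ($n=\sum_\varphi \ell_\varphi\deg\varphi$) upgrades ``spanning'' to ``basis''. Either route reduces the whole lemma to Lemma \ref{ordphi} plus elementary commutative algebra over $\F_p$; nothing deeper is needed.
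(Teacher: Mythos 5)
Your proposal is correct in its main line and rests on the same key input as the paper, namely Lemma \ref{ordphi}, but it is organized differently. The paper reduces modulo $p$ and proves \emph{linear independence} of the $Q_{i,j,k}=\overline{q}_{i,j}\overline{x}^k$ directly: assuming a vanishing linear combination, it picks a maximal pair $(i_0,j_0)$ with $A_{i_0,j_0}\ne0$, chooses $\varphi\mid\overline{g}_{i_0}$ with $\varphi\nmid B_{i_0,j_0}$, and derives a contradiction from the strict inequality of $\varphi$-adic orders supplied by Lemma \ref{ordphi}; it then invokes Nakayama to get generation. You instead prove \emph{spanning} of $\F_p[x]/(\overline f)$ via the decomposition into $\overline{g}_i$-primary blocks and the $\overline{g}_i$-adic filtration, using that $\overline{q}_{i,\ell_i}$ is a unit modulo $\overline{g}_i$ and that $\overline{q}_{i,j}=\overline{q}_{i,\ell_i}\overline{g}_i^{\,\ell_i-j}$, and then upgrade to a basis by a dimension count. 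The two arguments are dual readings of the same triangular structure of the transition matrix with respect to the $\varphi$-adic orders, so neither buys substantially more than the other; yours makes the block-triangular shape explicit, the paper's is shorter to write down. (A small aside: monicity of $f$ already gives $\deg\overline f=n$; the hypothesis $p>n$ is not needed for that point.)

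One caveat on your proposed shortcut in the last paragraph: checking that the images of the $\alpha_{i,j,k}$ span each local factor $R_\varphi=\F_p[x]/(\varphi^{\ell_\varphi})$ \emph{separately}, and then invoking a global dimension count, is not by itself a valid inference --- a family can surject onto every factor of a product without spanning the product. It is rescued here only because of two structural facts you should state: first, the elements with index $i\ne q$ vanish identically in $R_\varphi$ for $\varphi\mid\overline{g}_q$ (since $\ord_\varphi(\overline{q}_{i,j})=\ell_q$ by Lemma \ref{ordphi} and $\overline{g}_q$ is squarefree), so the transition matrix is block-diagonal indexed by $i$; second, within the block for $g_i$ you must treat all irreducible factors of $\overline{g}_i$ jointly, which is exactly what your first route does by filtering $\F_p[x]/(\overline{g}_i^{\ell_i})$ by powers of $\overline{g}_i$, whose graded pieces are free of rank one over the semilocal ring $\F_p[x]/(\overline{g}_i)$ and are hit by the $\deg g_i$ elements $\overline{q}_{i,j}\overline{x}^k$ for each fixed $j$. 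With that route made precise, the proof is complete.
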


\begin{proof}
Let us show first that the $n=\ell_1\deg g_1+\cdots+\ell_m\deg g_m$ polynomials $q_{i,j}x^k$ are linearly independent modulo $p$. Denote by $Q_{i,j,k}=\overline{q}_{i,j}\overline{x}^k$ their reduction modulo $p$, and suppose that  
\begin{equation}\label{li}
\sum\nolimits_{i,j,k}a_{i,j,k}Q_{i,j,k}=0,
\end{equation}
for some constants $a_{i,j,k}\in\F_p$. Consider the following polynomials in $\F_p[x]$: 
\begin{equation}\label{aij}
A_{i,j}=\sum\nolimits_{0\le k<\deg g_i}a_{i,j,k}Q_{i,j,k},\qquad B_{i,j}=\sum\nolimits_{0\le k<\deg g_i}a_{i,j,k}x^k,
\end{equation}
so that $A_{i,j}=\overline{q}_{i,j}B_{i,j}$ for all $i,j$.
Now, the equality (\ref{li}) is equivalent to $\sum_{i,j}A_{i,j}=0$. Let us show that this implies $A_{i,j}=0$ for all $i,j$. In fact, let us assume that the indices $1\le i\le m$ are ordered so that $\ell_1\ge \cdots\ge \ell_m$. Suppose that $A_{i_0,j_0}\ne0$ for a maximal pair $(i_0,j_0)$ with respect to the lexicographical order. Since $\deg B_{i_0,j_0}<\deg g_{i_0}$, there exists an irreducible factor $\varphi$ of $\overline{g}_{i_0}$ in $\F_p[x]$ such that $\varphi\nmid B_{i_0,j_0}$. By Lemma \ref{ordphi}, $\ord_\varphi(A_{i_0,j_0})=\ell_{i_0}-j_0$.

Now, for any pair $(i,j)$ with $A_{i,j}\ne 0$, we have  either $i< i_0$, or $i=i_0$, $j<j_0$, by the maximality of $(i_0,j_0)$.  Lemma \ref{ordphi} shows in both cases that  $\ord_\varphi(A_{i_0,j_0})<\ord_\varphi(A_{i,j})$. This implies $\ord_{\varphi}\left(\sum_{i,j}A_{i,j}\right)=\ell_{i_0}-j_0$, which is a contradiction. 

Thus, $B_{i,j}=0$ for all $i,j$, and this implies $a_{i,j,k}=0$ for all $i,j,k$. Therefore, the polynomials  
$Q_{i,j,k}$ are $\F_p$-linearly independent.

In particular, since all polynomials $q_{i,j}x^k$ have degree less than $n$, the integral elements $\alpha_{i,j,k}$ are $\Z$-linearly independent. Finally, they generate $\zp[\t]$ as a $\zp$-module by Nakayama's lemma. 
\end{proof}

\noindent{\bf Proof of Theorem \ref{main}.}
Consider the following  $\Z$-submodules:
$$
M=\gen{\alpha_{i,j,k}}_{\Z}\subset \Z[\t],\quad
M'=\gen{\alpha_{i,j,k}/N^{\lfloor y_{i,j}\rfloor}}_{\Z}\subset \Z_K.
$$

By Lemma \ref{numerators}, the elements $\alpha_{i,j,k}/N^{\lfloor y_{i,j}\rfloor}$ are $\Z$-linearly independent. Thus, we need only to prove that they generate  $\Z_K\otimes \zp$ as a $\zp$-module, for all prime divisors $p$ of $N$. 

For any such $p$, consider the following chain of free $\zp$-modules
$$
\zp[\t]= M\otimes_\Z\zp\subseteq M'\otimes_\Z\zp\subseteq \Z_K\otimes_\Z\zp
$$
The proof proceeds by comparison of the indices of the two larger modules with respect to 
$\zp[\t]$.

By Lemma \ref{numerators}, The family $\alpha_{i,j,k}$ is a $\zp$-basis of $\zp[\t]$. Hence, just by adding the $v_p$-value of all denominators of the basis of $M'$ we get:
\begin{equation}\label{index1}
v_p\left(\left(M'\otimes_\Z\zp\colon \zp[\t]\right)\right)=\rho\sum_{i=1}^m\left(\lfloor y_{i,1}\rfloor+\cdots+\lfloor y_{i,\ell_i}\rfloor\right)\deg(g_i).
\end{equation}

On the other hand, by Corollary \ref{regreg}, $f$ is $p$-regular with respect to the choices of all $p$-adic irreducible factors $\phi\in\Z_p[x]$ of all polynomials $g_1,\dots,g_m$. These factors $\phi$ act as representatives of the pairwise different irreducible factors of $f$ modulo $p$.

The theorem of the index (Theorem \ref{index}) shows that 
$$
v_p\left(\left(\Z_K\otimes_\Z\zp\colon \zp[\t]\right)\right)=\ind_p(f)=\sum\nolimits_{\phi\mid g_1\cdots g_m}\ind_\phi(f).
$$
Let us separate the latter sum according to the different polynomials $g_i$:
$$
\ind_p(f)=\sum_{i=1}^m\sum_{\phi\mid g_i}\ind_\phi(f).
$$

Now, by Theorem \ref{equalpolygons}, the Newton polygons $N_\phi^-(f)$ for the different factors of the same $g_i$ coincide with $E_\rho(N_{g_i}^-(f))$. By the definition of the $\phi$-index (Definition \ref{phindex}), we have
$$
\ind_\phi(f)=\deg(\phi) \,\left(\lfloor \rho y_{i,1}\rfloor+\cdots+\lfloor \rho y_{i,\ell_i}\rfloor\right).
$$
Hence, we may rewrite the expression for $\ind_p(f)$ as:
\begin{equation}\label{index3}
\ind_p(f)=\sum_{i=1}^m\left(\lfloor \rho y_{i,1}\rfloor+\cdots+\lfloor \rho y_{i,\ell_i}\rfloor\right)\deg g_i.
\end{equation}

Therefore, $M'\otimes_\Z\zp= \Z_K\otimes_\Z\zp$ if and only if the two computations of (\ref{index1}) and  (\ref{index3}) coincide, which is equivalent to
\begin{equation}\label{comparison}
\rho\left(\lfloor y_{i,1}\rfloor+\cdots+\lfloor y_{i,\ell_i}\rfloor\right)=
\lfloor \rho y_{i,1}\rfloor+\cdots+\lfloor \rho y_{i,\ell_i}\rfloor,
\end{equation}
for all $1\le i\le m$.

Any of the conditions (a) or (b) in Theorem \ref{main} implies (\ref{comparison}). In fact, if $N$ is squarefree we have $\rho=1$, and if all slopes of all Newton polygons $N_{g_i}^-(f)$ are integers, then all rational numbers $y_{i,j}$ are integers too. In both cases, (\ref{comparison}) is obvious.

Conversely, suppose $N$ is not squarefree and there exists a slope $-\lambda$ of some $N_{g_i}^-(f)$ which is not an integer; that is, $\lambda=h/e$ with $h,e$ positive coprime integers and $e>1$. 

Clearly, $ \rho\lfloor y_{i,j}\rfloor\le\lfloor \rho y_{i,j}\rfloor$ for all $i,j$; hence, it suffices to show the existence of some $1\le j\le \ell_i$ for which the inequality is strict, to conclude that (\ref{comparison}) does not hold. 

Since $N$ is not squarefree, there exists a prime divisor $p$ of $N$ with $\rho=v_p(N)>1$.
Let $(s,u)$ be the left endpoint of the side of slope $\lambda$ of $N_{g_i}^-(f)$. We have $y_{i,s}=u\in\Z_{>0}$, and 
$$
y_{i,s+k}=u-k\lambda=u-\dfrac{kh}e=\dfrac{ue-kh}e,\quad 1\le k<e.
$$
Since $h$ and $e$ are coprime, there exists $1\le k<e$ such that $kh\equiv 1 \md{e}$. For this value of $k$ we may write the positive numerator of the last fraction as $ue-kh=e-1+eb$, for some non-negative integer $b$. For $j=s+k$ we have
$$
\lfloor \rho y_{i,j}\rfloor=\rho b+\lfloor \rho (e-1)/e\rfloor>\rho b=\rho\lfloor y_{i,j}\rfloor,
$$
because $\rho>1$.
This ends the proof of the theorem.
\hfill{$\Box$}

\section{An example}
Let us illustrate the practical performance of Ore's method modulo $N$ in a concrete example. All computations have been done in a PC using Magma V2.19-7.

Consider the following irreducible polynomial of degree six:
$$
f(x)=(x^2+x+2)^2\left(x^2+x+2+a(a-1)\right)-4a^3,
$$
where $a=pq^2$, and $p,q$ are the prime numbers $p=281474976710677$, $q=1099511627791$.

Once we apply trial division by $2$, $3$ and $5$ (the primes less than or equal to the degree of $f$), we get:
$$
\dsc(f)=2^{8}3^7N,
$$
where $N$ is a $2685$-bit integer. For the primes $p=2,3$ we compute $p$-integral bases with the traditional methods. The remaining task is the computation of an $N$-integral basis. 

The previous factorization of $N$ would require a lot of time. Even the squarefree factorization of $N$ has a sensible cost: it takes 8001.21 seconds.

However, Ore's metod applied to the modulus $N$ is able to compute an $N$-integral basis in a much shorter time.

In fact, while trying to compute the squarefree decomposition of $f$ modulo $N$, the routine SFD0 detects the number $a^2$ as a proper divisor of $N$. From this information, we deduce the following splitting of $N$ into a product of powers of coprime base factors:  
$$
N=a^{12}N_1,
$$
where $N_1$ is a $1149$-bit integer. Hence, we consider $[a,N_1]$ as a list of moduli to which the method must be applied and we start over.

When we try to compute the squarefree decomposition of $f$ modulo $N_1$, the routine SFD0 detects $13$ as a proper divisor of $N_1$. This leads to the following splitting of $N_1$ into a product of powers of coprime base factors:  
$$
N_1=13^3N_2,
$$
where $N_2$ is a $1138$-bit integer. At this moment, we have $[a,13,N_2]$ as a list of moduli to which the method must be applied.

When we try to compute the squarefree decomposition of $f$ modulo $N_2$, the routine SFD0 detects a proper divisor $b$ of $N_2$, leading to:  
$$
N_2=b^2b',
$$
where $b$ is a 376-bit integer and $b'$ is a 387-bit integer coprime to $b$:
\begin{align*}
b=&\,1122564279191696029517040619451061074971851154240399803500\\&\,6152942172293886972367
007941839422932978008036753065979,\\
b'=&\,25863880992576676520072615872152447167521
78991256168048379\\&\,2872992197630558791833856045295665609912136344999241131327.
\end{align*}

We get $[a,13,b,b']$ as the list of coprime moduli $m$ for which we want to compute an $m$-integral basis. For each of these moduli, $f$ is $m$-regular and the method computes a candidate of $m$-integral basis without detecting a further splitting of the modulus. 

For the moduli $13$, $b$ and $b'$ we get Newton polygons with non-integer slopes. 
Nevertheless, for the modulus $a$ the routine SFD0 considers $f\equiv (x^2+x+2)^3\md{a}$ as a squarefree decomposition of $f$ modulo $a$ and for $g=x^2+x+2$, the Newton polygon $N_g^-(f)$ is one-sided of length $3$ and it has integer slope $-1$. 

The first phase is over and it took a total time of 0.11 seconds. Now, by Theorem \ref{main}, we have in our hands a $13$-integral basis and an $a$-integral basis (although $a$ is not squarefree), and we need only to compute the squarefree factorization of $b$ and $b'$ to decide if we have terminated already (if they are both squarefree), or we need to start over with some divisors of $b,b'$ as new moduli. 

The point is that these moduli $b,b'$ are small enough to obtain their squarefree factorization in a reasonable time: 0.21 and 14.91 seconds, respectively. It turns out that $b$ and $b'$ are both squarefree and the computation terminates in a total accumulated time of 15.23 seconds.

\end{document}